\newcounter{Scounter}
\newtheorem{thm}{Theorem}
\newtheorem{refthm}[thm]{Theorem}
\newtheorem{lem}{Lemma}
\newtheorem{Claim}{Claim}
\newtheorem{subclaim}{Subclaim}
\newcommand{\ir}[3]{#1(#2,#3)}
\newcommand{\IR}[3]{#1[#2,#3]}
\newcommand{\Ir}[3]{#1[#2,#3)}
\newcommand{\iR}[3]{#1(#2,#3]}
\newcommand{\IL}[3]{\ola{#1}[#2,#3]}
\newcommand{\sm}{\setminus}
\newcommand{\ms}{-}
\newcommand{\bs}{\setminus}
\newcommand{\dg}[2]{d_{#1}(#2)}
\newcommand{\ola}{\overleftarrow}
\newcounter{Ccounter}
\begin{document}

\title{A degree sum condition on the order, 
the connectivity and the independence number for Hamiltonicity}

\author{%
Shuya Chiba$^{1}$%
\thanks{Supported by JSPS KAKENHI Grant Number 17K05347.}
\and
Michitaka Furuya$^{2}$%
\thanks{Supported by JSPS KAKENHI Grant Number 26800086.}
\and
Kenta Ozeki$^{3}$%
\thanks{This work was supported by JST ERATO Kawarabayashi Large Graph Project, Grant Number JPMJER1201, Japan.}
\and
Masao Tsugaki$^{4}$%
\and 
Tomoki Yamashita$^{5}$%
\thanks{Supported by JSPS KAKENHI Grant Number 16K05262.} \\
\small
$^1$\small\textsl{Applied Mathematics, Faculty of Advanced Science and Technology, Kumamoto University,}\vspace{-6pt}\\ 
\small\textsl{2-39-1 Kurokami, Kumamoto 860-8555, Japan.}\vspace{-6pt}\\
\small{Email address: \texttt{schiba@kumamoto-u.ac.jp}}\vspace{3pt}\\
\small
$^{2}$\small\textsl{College of Liberal Arts and Sciences, Kitasato University,}\vspace{-6pt}\\
\small\textsl{1-15-1 Kitasato, Minami-ku, Sagamihara, Kanagawa 252-0373, Japan.}\vspace{-6pt}\\
\small Email address: \texttt{michitaka.furuya@gmail.com}\vspace{3pt}\\
\small
$^{3}$\small\textsl{Faculty of Environment and Information Sciences, Yokohama National University,}\vspace{-6pt}\\
\small\textsl{79-7 Tokiwadai, Hodogaya-ku, Yokohama 240-8501, Japan.}\vspace{-6pt}\\
\small{Email address: \texttt{ozeki-kenta-xr@ynu.ac.jp}}\vspace{3pt}\\
\small
$^{4}$\small\textsl{Department of Applied Mathematics, Tokyo University of Science,}\vspace{-6pt}\\
\small\textsl{1-3 Kagurazaka, Shinjuku-ku, Tokyo 162-8601, Japan.}\vspace{-6pt}\\
\small{Email address: \texttt{tsugaki@hotmail.com}} \vspace{3pt}\\
\small
$^{5}$\small\textsl{Department of Mathematics, Kindai University,}\vspace{-6pt}\\
\small\textsl{3-4-1 Kowakae, Higashi-Osaka, Osaka 577-8502, Japan. }\vspace{-6pt}\\
\small{Email address: \texttt{yamashita@math.kindai.ac.jp}}}
\date{}
\maketitle
\begin{abstract}
In [Graphs Combin.~24 (2008) 469--483.],
the third author and the fifth author
conjectured that
if $G$ is a $k$-connected graph
such that
$\sigma_{k+1}(G) \ge |V(G)|+\kappa(G)+(k-2)(\alpha(G)-1)$,
then $G$ contains a Hamiltonian cycle,
where $\sigma_{k+1}(G)$, $\kappa(G)$ and $\alpha(G)$
are
the minimum degree sum of $k+1$ independent vertices,
the connectivity and the independence number of $G$, respectively.
In this paper,
we settle this conjecture. 
This is an improvement
of the result obtained by Li:
If $G$ is a $k$-connected graph
such that 
$\sigma_{k+1}(G) \ge |V(G)|+(k-1)(\alpha(G)-1)$,
then $G$ is Hamiltonian.
The degree sum condition is best possible.
\end{abstract}

\section{Introduction}

\subsection{Degree sum condition for graphs with high connectivity to be Hamiltonian}
\label{intro}

In this paper,
we consider only finite undirected graphs
without loops or multiple edges.
For standard graph-theoretic terminology not explained,
we refer the reader to \cite{Bondybook}.

A \textit{Hamiltonian cycle} of a graph is a cycle containing all the vertices of the graph.
A graph having a Hamiltonian cycle is called a \textit{Hamiltonian graph}.
The Hamiltonian problem has long been fundamental in graph theory.
Since it is NP-complete,
no easily verifiable necessary and sufficient condition seems to exist.
Then instead of that,
many researchers have investigated sufficient conditions
for a graph to be Hamiltonian.
In this paper, we deal with a degree sum type condition,
which is one of the main stream of this study.

We introduce four invariants, including degree sum, 
which play important roles for the existence of a Hamiltonian cycle.
Let $G$ be a graph.
The number of vertices of $G$
is called its \textit{order},
denoted by $n(G)$.
A set $X$ of vertices in $G$ is called \textit{an independent set in $G$}
if no two vertices of $X$ are adjacent in $G$.
The \textit{independence number} of $G$
is defined by
the maximum cardinality of an independent set in $G$,
denoted by $\alpha(G)$.
For two distinct vertices $x,y \in V(G)$,
the \textit{local connectivity} $\kappa_G(x,y)$
is defined to be the maximum number of internally-disjoint paths
connecting $x$ and $y$ in $G$.
A graph $G$ is \textit{$k$-connected}
if
$\kappa_G(x,y) \ge k$
for any two distinct vertices $x, y \in V(G)$.
The \textit{connectivity} $\kappa(G)$ of $G$
is the maximum value of $k$ for which $G$ is $k$-connected.
We denote by $N_{G}(x)$ and $d_{G}(x)$ 
the neighbor and the degree of a vertex $x$ in $G$, respectively. 
If $\alpha(G) \ge k$,
let
$$
\sigma _{k} (G) = 
\min \big\{\sum_{x\in X}\dg{G}{x}
\colon \text{$X$ is an independent set in $G$ 
with $|X|=k$} \big\};
$$
otherwise
let $\sigma _{k} (G) = 
+\infty$.
If the graph $G$ is clear from the context,
we simply
write
$n$,
$\alpha$, $\kappa$ and
$\sigma_k$ instead of
$n(G)$,
$\alpha(G)$,
$\kappa(G)$
and $\sigma_k(G)$, respectively.


\paragraph{}
One of the main streams of the study 
of the Hamiltonian problem
is,
as mentioned above,
to consider 
degree sum type
sufficient conditions
for graphs to have a Hamiltonian cycle.
We list some of them below.
(Each of the conditions is best possible in some sense.)

\begin{refthm}
\label{degresult}
Let $G$ be a graph of order at least three.
If $G$ satisfies one of the following,
then $G$ is Hamiltonian.
\begin{enumerate}[{\upshape (i)}]
\item
{\upshape (Dirac \cite{Dirac})}
The minimum degree of $G$ is at least $\frac{n}{2}$.
\item
{\upshape (Ore \cite{Ore})}
$\sigma_2 \ge n$.
\item
{\upshape (Chv\'{a}tal and Erd\H{o}s \cite{Chvatal&Erdos})}
$\alpha \le \kappa$.
\item
{\upshape (Bondy \cite{Bondy})}
$G$ is $k$-connected and 
$\displaystyle
\sigma_{k+1} > \frac{(k+1)(n-1)}{2}$.
\item
{\upshape (Bauer, Broersma, Veldman and Li \cite{BBVL})}
$G$ is $2$-connected and $\sigma_3 \ge n + \kappa$.
\end{enumerate}
\end{refthm}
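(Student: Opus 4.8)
The plan is to treat all five parts by the classical extremal method for Hamiltonicity: assume $G$ is \emph{not} Hamiltonian, fix a longest cycle $C$, and use each hypothesis either to construct a strictly longer cycle (contradicting maximality) or to exhibit a large independent set on which the total degree is provably too small (contradicting the stated lower bound).

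For (i) and (ii) the most economical route is the Bondy--Chvátal closure, which I would invoke as a black box: a graph of order $n$ is Hamiltonian if and only if the graph obtained by repeatedly joining non-adjacent vertices whose degree sum is at least $n$ is Hamiltonian. Under $\sigma_2 \ge n$ every non-adjacent pair qualifies, so the closure is $K_n$, which is Hamiltonian for $n \ge 3$; this gives (ii), and (i) follows at once since $\delta(G) \ge \frac{n}{2}$ forces $\sigma_2 \ge n$. Both also admit a direct proof by the longest-path/longest-cycle argument used below.

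For the connectivity-type statements (iii)--(v) I would orient a longest cycle $C = v_1 v_2 \cdots v_c v_1$ and choose a vertex $w$ outside $C$, which exists since $G$ is non-Hamiltonian. Writing $v_i^+$ for the successor of $v_i$ along $C$, the key local fact is that if $w$ is joined to $C$ by internally disjoint paths ending at $v_{i_1}, \dots, v_{i_t}$, then the successors $v_{i_1}^+, \dots, v_{i_t}^+$ are pairwise non-adjacent and none is adjacent to $w$; otherwise one reroutes through $w$ to build a longer cycle. Hence $\{w\} \cup \{v_{i_j}^+ : 1 \le j \le t\}$ is independent of size $t+1$. For (iii), the fan version of Menger's theorem yields $t = \kappa$ such paths, producing an independent set of size $\kappa + 1 > \kappa \ge \alpha$, the required contradiction. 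For (iv) and (v) I would instead sum degrees over such an independent set (of size $k+1$, respectively $3$, assembled using the $k$-connectivity): each successor $v_{i_j}^+$ sends its edges only into a controlled stretch of $C$ together with the component of $w$, and adding these bounds gives an upper estimate of the shape $\frac{(k+1)(n-1)}{2}$ in Bondy's case and $n + \kappa - 1$ in the Bauer--Broersma--Veldman--Li case, contradicting the hypothesized lower bound on the degree sum.

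The main obstacle, and the technical heart of (iv)--(v), is the degree bookkeeping: one must guarantee enough attachment points on $C$ to assemble $k+1$ (respectively $3$) genuinely independent vertices, and then bound the total number of edges leaving them without double-counting, which is exactly where the connectivity term enters the right-hand side of the inequality. Making this accounting tight rather than off by a lower-order term is the delicate step, and it is precisely the estimate that the main theorem of this paper must sharpen in order to replace Li's term $(k-1)(\alpha-1)$ by $\kappa + (k-2)(\alpha-1)$.
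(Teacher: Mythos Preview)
The paper does not prove Theorem~\ref{degresult} at all: it is stated as a list of classical results with citations to Dirac, Ore, Chv\'atal--Erd\H{o}s, Bondy, and Bauer--Broersma--Veldman--Li, and is used only as background. There is therefore no ``paper's own proof'' to compare your proposal against.

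That said, your sketch is a reasonable outline of the standard arguments. A few remarks. For (i)--(ii) the closure argument is clean and correct. For (iii) your successor argument is exactly the textbook proof. For (iv) and (v), however, your description is too schematic to count as a proof: the phrase ``each successor sends its edges only into a controlled stretch of $C$'' hides the real work. In Bondy's theorem one does not simply bound each $d(v_{i_j}^+)$ separately; one pairs up forbidden positions on $C$ (successors of neighbours of distinct $v_{i_j}^+$ cannot coincide, by a crossing argument) to get the factor $\tfrac{1}{2}$. In (v) the appearance of $\kappa$ on the right-hand side requires choosing the off-cycle vertex and the minimum cut carefully and tracking how many of the three independent vertices can lie in the cut; ``$n+\kappa-1$'' does not fall out of the successor bound alone. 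If you actually needed to supply proofs here, those two estimates would require several more lines each. Since the paper treats all five parts as quotable background, no proof is expected.
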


To be exact,
Theorem \ref{degresult} (iii) is not 
a degree sum type condition,
but it is closely related.
Bondy \cite{Bondyrem} showed that 
Theorem \ref{degresult} (iii) implies (ii).
The current research of this area is based on
Theorem \ref{degresult} (iii).
Let us explain how to expand the research from Theorem \ref{degresult} (iii):
Let $G$ be a $k$-connected graph,
and suppose that one wants to consider whether $G$ is Hamiltonian.
If $\alpha \leq k$,
then
it follows from Theorem \ref{degresult} (iii) that 
$G$ is Hamiltonian.
Hence 
we may assume that $\alpha \geq k+1$,
that is,
$G$ has an independent set of order $k+1$.
Thus,
it is natural to consider a $\sigma_{k+1}$ condition
for a $k$-connected graph.
Bondy \cite{Bondy}
gave a $\sigma_{k+1}$ condition of Theorem \ref{degresult} (iv).

\paragraph{}
In this paper,
we give a much weaker 
$\sigma_{k+1}$ condition
than that of Theorem \ref{degresult} (iv).

\begin{thm}\label{main}
Let $k$ be an integer with $k \ge 1$
and
let $G$ be a $k$-connected graph.
If $$\sigma_{k+1} \ge n+\kappa+(k-2)(\alpha-1),$$
then $G$ is Hamiltonian.
\end{thm}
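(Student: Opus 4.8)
The plan is to argue by contradiction, after first disposing of the case of small independence number. Since $\kappa \ge k$, if $\alpha \le \kappa$ then the Chv\'atal--Erd\H{o}s condition (Theorem~\ref{degresult}(iii)) already yields a Hamiltonian cycle; moreover $\sigma_{k+1}$ is finite only when $\alpha \ge k+1$. Hence I may assume throughout that $\alpha \ge \kappa+1 \ge k+1$. Now suppose $G$ is not Hamiltonian, and let $C$ be a longest cycle of $G$, fixed together with an orientation; for $u \in V(C)$ write $u^{+}$ and $u^{-}$ for the successor and predecessor of $u$ along $C$, and for a set $S \subseteq V(C)$ put $S^{+}=\{u^{+}\colon u\in S\}$. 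Since $G$ is not Hamiltonian, $V(G)\setminus V(C)\neq\emptyset$, so I pick a component $H$ of $G-V(C)$ and set $A = N_G(V(H))\cap V(C)$, the set of attachment vertices of $H$ on $C$.

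The engine of the argument is a family of forbidden configurations coming from the maximality of $C$, each proved by the usual rerouting: were the configuration present, it would produce a cycle longer than $C$. First, no two vertices of $A$ are consecutive on $C$ (otherwise the edge between them could be replaced by a path through the connected subgraph $H$), so $A^{+}\cap A=\emptyset$ and $|A^{+}|=|A|$; in particular no vertex of $A^{+}$ has a neighbor in $H$, and $V(C)\setminus A\neq\emptyset$, whence $A$ is a vertex cut and $|A|\ge\kappa$ by $\kappa$-connectivity. Second, $A^{+}$ is an independent set (an edge $a^{+}b^{+}$ with $a,b\in A$, combined with a path in $H$ joining the $H$-neighbors of $a$ and $b$, again yields a longer cycle). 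Consequently, for every $v\in V(H)$ the set $I=\{v\}\cup A^{+}$ is independent with $|I|=|A|+1\ge\kappa+1\ge k+1$. Thus $I$ contains a $(k+1)$-element independent set, and the whole difficulty is to select $k+1$ of its vertices whose degree sum is strictly smaller than $n+\kappa+(k-2)(\alpha-1)$.

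To bound the degree sum I would pass to the dual count, estimating for each $u\in V(G)\setminus I'$ how many vertices of a chosen $(k+1)$-subset $I'$ it can dominate, so that $\sum_{x\in I'}d_G(x)=\sum_{u\notin I'}|N_G(u)\cap I'|$ is controlled segment by segment along $C$. The vertex $v$ only sees $V(H)\cup A$, contributing roughly $|V(H)|-1+\kappa$, while the vertices of $A^{+}$ see $C$ and the other components of $G-V(C)$, where maximality of $C$ restricts their neighborhoods. The sharp tool here is the \emph{insertion technique}: one shows that most vertices of $C\setminus A$ are insertible into the $H$-system, which both forbids long chords among the successors in $A^{+}$ and limits how many vertices of $A^{+}$ a single vertex of $C$ can be adjacent to. Summing these local bounds should cover $V(G)$ essentially once, producing the leading term $n$; the adjacencies of $v$ into $A$ and the multiplicities forced at the few non-insertible vertices should give the additive $\kappa$, and—after choosing $I'$ so that the $k-2$ ``surplus'' successors are as cheap as possible—the correction $(k-2)(\alpha-1)$.

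I expect this final bookkeeping to be the main obstacle. Forcing the leading term to be exactly $n$ (rather than a constant more) requires showing that the neighborhoods of the selected $k+1$ independent vertices overlap in a tightly controlled way, and isolating the additive error as precisely $\kappa+(k-2)(\alpha-1)$ forces one to play the connectivity of $H$ (which supplies the $\kappa$) against the independence number, charging each of the $k-2$ surplus successors at most $\alpha-1$ repeated neighbors. Making the choices of $H$, of $v\in V(H)$, and of the $k+1$ vertices of $A^{+}\cup\{v\}$ simultaneously optimal—so that the insertion structure is clean and no term is double counted—is the delicate heart of the proof, and is exactly where the gain over the weaker bound $\sigma_{k+1}\ge n+(k-1)(\alpha-1)$ of Li must come from. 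As a sanity check, when $k=2$ the target reduces to $\sum_{x\in I'}d_G(x)<n+\kappa$ over three independent vertices, recovering Theorem~\ref{degresult}(v), which reassures me that the scheme is calibrated correctly.
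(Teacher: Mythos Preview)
Your proposal is the standard longest-cycle setup, correctly stated, but it stops precisely where the real proof begins. You yourself flag that ``making the choices \ldots\ simultaneously optimal \ldots\ is the delicate heart of the proof'' and that the ``final bookkeeping'' is ``the main obstacle''---and then you do not do it. What remains is not bookkeeping; it is the entire argument, and several of the key mechanisms you would need are absent or misidentified.

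Two concrete gaps. First, you work with the naive successor set $A^{+}$, but the paper replaces each $u_i^{+}$ by the \emph{first non-insertible vertex} $x_i$ in the arc $(u_i,u_i')$; this is what yields the usable bound $d_C(x_i)\le |D_i|+\alpha-1$ (with $D_i=C(u_i,x_i)$) and the strengthened crossing lemmas (Lemma~\ref{insertible}). With bare successors one only recovers the coarser inequalities that lead to Li's bound $n+(k-1)(\alpha-1)$, not the target. Second, your plan to extract the additive $\kappa$ from ``the adjacencies of $v$ into $A$'' is not how it works: in the paper the $\kappa$ comes from introducing a minimum vertex cut $S$ with $|S|=\kappa$, proving that $D\cup X\cup W\cup H\subseteq V_1\cup S$ for one component $V_1$ of $G-S$ (Claim~\ref{U1}), and then playing a vertex $v_2\in V_2$ against the $x_i$'s. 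This cut argument, together with the extremal choice of $x_0$ maximising $d_C(x_0)$ over all longest cycles (which ultimately forces $d_C(x_0)=|X|=\alpha-1$, Claim~\ref{x0a-1}), and the auxiliary sets $W$ and $L$ built from crossing neighbour patterns of three chosen $x_i$'s, drive a lengthy case analysis that constitutes the substance of the proof. None of these ingredients appears in your sketch, and without them the dual count you describe cannot separate $\kappa+(k-2)(\alpha-1)$ from $(k-1)(\alpha-1)$.
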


Theorem \ref{main}
was conjectured by
Ozeki and Yamashita \cite{OY},
and 
has been proven
for small integers $k$:
The case $k=2$ of Theorem \ref{main} 
coincides Theorem \ref{degresult} (v).
The cases $k=1$ and $k=3$ were shown 
by Fraisse and Jung \cite{FJ},
and by Ozeki and Yamashita \cite{OY}, respectively.

\subsection{Best possibility of Theorem \ref{main}}
\label{degbest}

In this section,
we show that 
the $\sigma_{k+1}$ condition in Theorem \ref{main} is 
best possible in some senses.

We first 
discuss the lower bound 
of the $\sigma_{k+1}$ condition.
For an integer $l\geq 2$ and $l$ vertex-disjoint graphs $H_{1},\ldots ,H_{l}$, 
we define the graph $H_{1}+\cdots +H_{l}$ from the union of $H_{1},\ldots ,H_{l}$ 
by joining every vertex of $H_{i}$ to every vertex of $H_{i+1}$ for $1\leq i\leq l-1$.
Fix an integer $k\geq 1$.
Let $\kappa $, $m$ and $n$ be integers
with $k\leq \kappa <m$ and $2m+1\leq n\leq 3m-\kappa $.
Let $G_{1}=K_{n-2m}+\overline{K}_{\kappa}+\overline{K}_{m}+\overline{K}_{m-\kappa}$,
where $K_{l}$ denotes a complete graph of order $l$
and
$\overline{K}_{l}$ denotes the complement of $K_{l}$.
Then
$\alpha(G_{1})=m+1$, $\kappa (G_{1})=\kappa $ and
\begin{eqnarray*}
\sigma_{k+1}(G_{1})
&=& (n - 2m -1 + \kappa) + km\\
&=&
n(G_{1}) + \kappa(G_{1}) +(k-2)(\alpha(G_{1})-1)-1.
\end{eqnarray*}
(Note that it follows from condition
``$n\leq 3m-\kappa $''
that $n - 2m -1 + \kappa < m$.)
Since deleting all the vertices in $\overline{K}_{\kappa}$
and those in $\overline{K}_{m -\kappa}$
breaks $G_1$ into $m+1$ components,
we see that $G_{1}$ has no Hamiltonian cycle.
Therefore,
the $\sigma_{k+1}$ condition in Theorem \ref{main} is 
best possible.

We next
discuss 
the relation between the coefficient of $\kappa$
and that of $\alpha - 1$.
By Theorem \ref{degresult} (iii),
we may assume that $\alpha \ge \kappa+1$.
This implies that 
$$n+\kappa+(k-2)(\alpha-1) 
\ge n+ (1 + \varepsilon) \kappa + (k-2-\varepsilon)(\alpha-1)$$
for arbitrarily $\varepsilon > 0$.
Then
one may expect
that 
the $\sigma_{k+1}$ condition in Theorem \ref{main}
can be replaced with
``$n+ (1 + \varepsilon) \kappa + (k-2-\varepsilon)(\alpha-1)$''
for some $\varepsilon > 0$.
However, the graph $G_{1}$ as defined above shows that it is not true:
For any $\varepsilon > 0$,
there exist two integers $m$ and $\kappa$
such that 
$\varepsilon(m-\kappa) \geq 1$.
If we construct the  above graph $G_1$
from such integers $m$ and $\kappa$,
then
we have 
\begin{eqnarray*}
\sigma_{k+1}(G_{1})
&=& n + \kappa + (k-2) m - 1\\
&=&
n + (1 + \varepsilon) \kappa
+ (k-2-\varepsilon)m-1
+\varepsilon(m- \kappa)\\
&\ge&
n(G_{1}) + (1 + \varepsilon) \kappa(G_{1})
+ (k-2-\varepsilon)\big(\alpha(G_{1}) -1\big),
\end{eqnarray*}
but $G_{1}$ is not Hamiltonian.
This means that 
the coefficient $1$ of $\kappa$
and 
the coefficient $k-2$ of $\alpha - 1$
are,
in a sense,
best possible.

\subsection{Comparing Theorem \ref{main} to other results}
\label{compairothers}

In this section,
we 
compare
Theorem \ref{main} 
to 
Theorem \ref{degresult} (iv)
and
Ota's result (Theorem \ref{Ota}).

We first show that
the $\sigma_{k+1}$ condition of Theorem \ref{main} is  weaker than that of Theorem \ref{degresult} (iv).
Let $G$ be a $k$-connected graph
satisfying the $\sigma_{k+1}$ condition of Theorem \ref{degresult} (iv).
Assume that
$\alpha \ge (n+1)/2$.
Let $X$ be an independent set of order at least $(n+1)/2$.
Then
$|V(G) \setminus X| \le (n-1)/2$
and
$|V(G) \setminus X| \ge k$
since $V(G) \setminus X$ is a cut set.
Hence $(n+1)/2 \ge k+1$,
and
we can take a subset $Y$ of $X$ with $|Y|=k+1$.
Then
$N_G(y) \subseteq V(G) \sm X$ for $y\in Y$,
and hence
$\sum_{y \in Y} d_G(y) 
\le  (k+1)|V(G) \bs X|
\le 
(k+1)(n-1)/2$.
This contradicts 
the $\sigma_{k+1}$ condition
of Theorem \ref{degresult} (iv).
Therefore
$n/2 \ge \alpha$.
Moreover,
by Theorem \ref{degresult} (iii),
we may assume that
$\alpha \geq \kappa+1$.
Therefore,
the following inequality holds:
\begin{eqnarray*}
\sigma _{k+1}
&>&\frac{(k+1)(n-1)}{2}\\
&=&n-1+\frac{(k-1)(n-1)}{2}\\
&\ge&n-1+\frac{(k-1)(2\alpha-1)}{2}\\
&\ge&n-1+(k-1)(\alpha-1)\\
&\ge& n+\kappa+(k-2)(\alpha-1)-1.
\end{eqnarray*}
Thus,
the $\sigma_{k+1}$ condition
of Theorem \ref{degresult} (iv)
implies 
that
of Theorem \ref{main}.

We next compare
Theorem \ref{main}
to the following Ota's result.

\begin{thm}[Ota \cite{Ota}]\label{Ota}
Let $G$ be a $2$-connected graph.
If $\sigma_{l+1} \ge n+l(l-1)$
for all integers $l$
with  $l \ge \kappa$,
then $G$ is Hamiltonian.
\end{thm}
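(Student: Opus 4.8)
The plan is to run a longest-cycle argument, using the flexibility of the hypothesis (which asserts the degree bound for \emph{every} $l \ge \kappa$) to absorb the a priori unknown size of the relevant cut. Suppose, for contradiction, that $G$ is $2$-connected, satisfies $\sigma_{l+1} \ge n + l(l-1)$ for all $l \ge \kappa$, yet has no Hamiltonian cycle. Fix a longest cycle $C$ together with an orientation; since $G$ is not Hamiltonian, $R := V(G) \setminus V(C) \neq \emptyset$. Let $H$ be a component of $G[R]$ and let $S = N_C(H)$ be its set of attachment vertices on $C$. Because $S$ separates $H$ from the rest of $G$, we have $l := |S| \ge \kappa$, so the hypothesis applies with this particular value of $l$.

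The first step is to manufacture a large independent set. Writing $S = \{x_1, \dots, x_l\}$ in the cyclic order induced by the orientation and letting $x_i^+$ denote the successor of $x_i$ on $C$, I would set $I = \{u\} \cup \{x_i^+ : 1 \le i \le l\}$ for a fixed $u \in V(H)$. The claim is that $I$ is independent of size $l+1$. Independence is forced by the maximality of $C$: if some $x_i^+ x_j^+$ were an edge, or if some $x_i^+$ were adjacent to a vertex of $H$, then splicing a path through $H$ between $x_i$ and $x_j$ into $C$ and rerouting the intervening arcs would produce a cycle strictly longer than $C$, a contradiction. In particular $\alpha \ge l+1$, so $\sigma_{l+1}$ is finite and the hypothesis yields $\sum_{v \in I} d_G(v) \ge \sigma_{l+1} \ge n + l(l-1)$.

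The second and decisive step is to contradict this inequality with a matching upper bound on $\sum_{v\in I} d_G(v)$. I would rewrite the degree sum as an incidence count $\sum_{v\in I} d_G(v) = \sum_{w \in V(G)\setminus I} |N_G(w)\cap I|$, which is legitimate because $I$ is independent, and then separate the first incidence of each vertex from the surplus, $\sum_{v\in I} d_G(v) = |N_G(I)| + \sum_{w} (|N_G(w)\cap I| - 1)^{+}$, where $N_G(I)$ denotes the set of vertices with at least one neighbor in $I$. The first summand is at most $n - |I| = n - l - 1$, which supplies the linear term. The surplus term counts, with multiplicity, the vertices adjacent to two or more members of $I$, and the aim is to bound it by $l^2$, so that the total becomes at most $n + l(l-1) - 1$, strictly below the value guaranteed by the hypothesis. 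To bound the surplus I would use a crossing argument on the longest cycle: for $i \neq j$ the neighborhoods of $x_i^+$ and $x_j^+$ on $C$ cannot interleave, since a pair of crossing chords, combined with a path through $H$ joining $x_i$ to $x_j$, could be spliced into $C$ to yield a strictly longer cycle. Each such obstruction limits the shared neighbors of the pair $(x_i^+, x_j^+)$, and summing over the $\binom{l}{2}$ pairs of successors produces the quadratic bound $l(l-1)$.

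I expect the crossing step to be the main obstacle. Making precise which pairs of positions on $C$ are forbidden to be simultaneously dominated by $x_i^+$ and $x_j^+$, and checking that these forbidden configurations really do sum to a surplus of at most $l^2$ without double counting, requires a delicate analysis of how the arcs of $C$ can be reassembled using paths through $H$; the presence of several components of $G[R]$ and of chords of $C$ among the successors must be accommodated so that no adjacency is over- or under-counted. Once this bookkeeping is organized, the strict inequality $\sum_{v\in I} d_G(v) < n + l(l-1)$ contradicts the degree hypothesis and closes the argument.
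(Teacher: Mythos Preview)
The paper does not prove this statement: Theorem~\ref{Ota} is quoted from Ota's 1995 paper \cite{Ota} and is invoked only as a point of comparison with Theorem~\ref{main}. There is no proof in the present paper to compare your attempt against.

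On the merits of your sketch itself: the setup is correct and standard---taking a longest cycle, forming the successor set $\{x_i^+\}$ together with a vertex $u$ of a residual component, checking independence, and applying the hypothesis at the value $l=|N_C(H)|\ge\kappa$ is exactly the right framework. The gap is where you say it is. Your proposed bound on the ``surplus'' $\sum_w (|N_G(w)\cap I|-1)^+$ by $l^2$ is not delivered by the pairwise crossing obstructions you describe. Crossing lemmas of the type $N_C(x_i^+)^-\cap N_C(x_j^+)=\emptyset$ give disjointness along arcs of $C$, but they do not aggregate into a global count of the form you need: a vertex adjacent to $k$ members of $I$ contributes $k-1$ to the surplus, and your charging over $\binom{l}{2}$ pairs neither matches this quantity nor bounds it without further argument. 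In addition you must control (i) neighbours of the $x_i^+$ lying in components of $G-C$ other than $H$, and (ii) the attachment vertices $x_i\in S$ themselves, each of which may be adjacent both to $u$ and to several successors. Ota's actual proof handles these contributions with a more refined counting than the outline here; as written, your proposal correctly identifies the architecture but leaves the quadratic bound as an unproved assertion.
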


We first mention about the reason
to compare Theorem \ref{main} to Theorem \ref{Ota}. 
Li \cite{Hao13}
proved the following theorem,
which was conjectured by 
Li, Tian, and Xu \cite{LTX10}.
(Harkat-Benhamadine, Li and Tian \cite{HLT},
and
Li, Tian, and Xu \cite{LTX10}
have already proven the case $k=3$ and the case $k=4$,
respectively.)

\begin{thm}[Li \cite{Hao13}]
\label{coro}
Let $k$ be an integer with $k \ge 1$
and
let $G$ be a $k$-connected graph.
If $\sigma_{k+1} \ge n+(k-1)(\alpha-1)$,
then $G$ is Hamiltonian.
\end{thm}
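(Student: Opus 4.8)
The plan is to deduce Theorem~\ref{coro} directly from the stronger Theorem~\ref{main}, which is available to us, rather than to reprove it from scratch. The first step is a standard reduction via the Chv\'{a}tal--Erd\H{o}s condition. If $\alpha \le \kappa$, then Theorem~\ref{degresult}~(iii) already guarantees that $G$ is Hamiltonian, so there is nothing to prove. Hence I may assume throughout that $\alpha \ge \kappa + 1$, equivalently $\kappa \le \alpha - 1$.

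The second step is the key inequality comparing the two degree-sum thresholds. Splitting off one copy of $\alpha - 1$ from the coefficient $k-1$, I rewrite
\begin{equation*}
n + (k-1)(\alpha - 1) = n + (\alpha - 1) + (k-2)(\alpha - 1).
\end{equation*}
Since $\alpha - 1 \ge \kappa$ by the reduction, replacing the summand $\alpha - 1$ on the right by the smaller quantity $\kappa$ can only decrease the value, so
\begin{equation*}
n + (k-1)(\alpha - 1) \ge n + \kappa + (k-2)(\alpha - 1).
\end{equation*}
This manipulation is valid for every $k \ge 1$, independently of the sign of $k-2$, since it merely moves a single term. Consequently the hypothesis $\sigma_{k+1} \ge n + (k-1)(\alpha - 1)$ of Theorem~\ref{coro} implies the hypothesis $\sigma_{k+1} \ge n + \kappa + (k-2)(\alpha - 1)$ of Theorem~\ref{main}, and invoking Theorem~\ref{main} completes the argument.

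There is essentially no combinatorial obstacle here: once Theorem~\ref{main} is granted, the whole content collapses to the one-line inequality above together with the Chv\'{a}tal--Erd\H{o}s reduction. The only point that needs a moment's care is the reduction itself, namely the passage to $\alpha \ge \kappa + 1$; without it, $\kappa$ need not be bounded by $\alpha - 1$ and the comparison of thresholds breaks down. Of course this is not how Li first proved Theorem~\ref{coro}, since at that time Theorem~\ref{main} was only a conjecture. A genuinely self-contained proof would instead have to argue directly with a longest cycle and the structure of the paths attached to it, and there the real difficulty would be controlling the neighbourhoods of the vertices along a maximal path or cycle, which is precisely the hard work that Theorem~\ref{main} now absorbs.
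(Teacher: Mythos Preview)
Your argument is correct and matches exactly what the paper itself indicates: it does not give a standalone proof of Theorem~\ref{coro} but remarks that ``Theorem~\ref{main} is, assuming Theorem~\ref{degresult}~(iii), an improvement of Theorem~\ref{coro},'' which is precisely the Chv\'atal--Erd\H{o}s reduction followed by the inequality $(k-1)(\alpha-1)=(\alpha-1)+(k-2)(\alpha-1)\ge \kappa+(k-2)(\alpha-1)$ that you spell out. The only additional historical detail the paper supplies is that Li's original proof derived Theorem~\ref{coro} as a corollary of Ota's theorem (Theorem~\ref{Ota}), which you allude to in your last paragraph without naming it.
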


In fact,
Li showed Theorem \ref{coro}
just as a corollary of Theorem \ref{Ota}.
Note that
Theorem \ref{main} is,
assuming Theorem \ref{degresult} (iii),
an improvement of Theorem \ref{coro}.
Therefore
we should show
that 
Theorem \ref{main} cannot be implied 
by Theorem \ref{Ota}.
(Ozeki, in his Doctoral Thesis \cite{Ozeki}, compared the relation between 
several theorems,
including 
Theorem \ref{degresult} (i), (ii), (iii) and (v),
the case $k=3$ of
Theorems 
\ref{main} and \ref{coro},
and Theorem \ref{Ota}.)

Let $\kappa, r,k,m$ be integers such that
$4 \le r$,
$3 \le k \le \kappa-2$
and
$m = (k+1)(r-2)+4.$
Let $G_{2}=K_{1} + \overline{K}_{\kappa}+K_{\kappa+m-r}+(\overline{K}_{m}+K_{r})$.
Then
$n(G_{2})=2\kappa + 2m + 1$,
$\kappa(G_{2})=\kappa$
and
$\alpha(G_{2})=\kappa + m$.
Since 
\begin{eqnarray*}
\kappa + k(\kappa + m) - (k+1) (\kappa + m -r +1)
&=& (k+1)(r-1) -m \\
&=& (k+1)(r-1) - (k+1)(r-2)-4 \\
&=& k-3 \\
&\ge&  0,
\end{eqnarray*}
it follows that 
\begin{eqnarray*}
\sigma_{k+1}(G_2) 
&=& \min\big\{\kappa + k(\kappa + m),\ (k+1) (\kappa + m -r+1)\big\} \\
&=& \kappa+k(\kappa+m) -(k-3)\\
&=& (2\kappa + 2m + 1) + \kappa + (k-2)(\kappa + m - 1)\\
&=& n(G_2) + \kappa(G_2) + (k-2)(\alpha(G_2) - 1).
\end{eqnarray*}
Hence the assumption of Theorem \ref{main} holds.
On the other hand,
for $l=\alpha(G_2) - 1 = \kappa+m-1$,
we have 
\begin{eqnarray*}
n(G_2) + l(l-1) - \sigma_{l+1}(G_2)
&=& (2\kappa + 2m+1) + (\kappa+m-1)(\kappa + m-2) \\
&& - \big\{\kappa(\kappa+m-r+1)+m(\kappa+m)\big\} \\
&=& \kappa (r - 2) -m +3
\\
&=& \kappa (r - 2)- (k+1)(r-2)-4 +3
\\
&=& (\kappa-k-1)(r - 2)-1
\\
&\ge& (r - 2)-1 
\\
&>& 0.
\end{eqnarray*}
Hence the assumption of Theorem \ref{Ota}
does not hold.
These yield that 
for the graph $G_2$,
we can apply Theorem \ref{main},
but cannot apply Theorem \ref{Ota}.

\section{Notation and lemmas}


Let $G$ be a graph
and $H$ be a subgraph of $G$,
and
let $x \in V(G)$ and $X \subseteq V(G)$.
We denote by 
$N_G(X)$
the set of vertices in $V(G) \sm X$
which are adjacent to some vertex in $X$. 
We define
$N_H(x) = N_G(x) \cap V(H)$
and
$\dg{H}{x} = |N_H(x)|$.
Furthermore,
we define
$N_H(X) = N_G(X) \cap V(H)$.
If there is no fear of confusion,
we often identify $H$
with its vertex set $V(H)$.
For example,
we often write $G\ms H$
instead of $G\ms V(H)$. 
For a subgraph $H$,
a path $P$ is called an \textit{$H$-path} 
if both end vertices of $P$ are contained in $H$ 
and all internal vertices are not contained in $H$.
Note that each edge of $H$ is an $H$-path.

Let $C$ be a cycle (or a path) with a fixed orientation in a graph $G$. 
For $x,y \in V(C)$,
we denote
by $\IR{C}{x}{y}$
the path from $x$ to $y$
along the orientation of ${C}$.
The reverse sequence
of $\IR{C}{x}{y}$
is denoted by $\IL{C}{y}{x}$.
We denote $\IR{C}{x}{y} \ms \{x,y\}$,
$\IR{C}{x}{y} \ms \{x\}$
and $\IR{C}{x}{y} \ms \{y\}$
by $\ir{C}{x}{y}$, $\iR{C}{x}{y}$ and $\Ir{C}{x}{y}$, respectively.
For $x \in V(C)$, 
we denote the successor and the predecessor 
of $x$ on $C$ by $x^{+}$ and $x^{-}$, respectively.
For $X \subseteq V(C)$,
we define $X^{+} = \{x^{+} : x \in X\}$ 
and $X^{-} = \{x^{-} : x \in X \}$. 
Throughout this paper, we consider that every cycle
has a fixed orientation.

In this paper, we extend the concept of 
\textit{insertible},
introduced by Ainouche \cite{Ainouche92},
which has been used for the proofs of the results on cycles.

Let $G$ be a graph, 
and $H$ be a subgraph of $G$.
Let 
$X(H)= \{u \in V(G-H) : \mbox{$uv_1, uv_2 \in E(G)$ for some $v_1v_2 \in E(H)$}  \}$, 
let 
$I(x;H) = \{v_1v_2 \in E(H) : xv_1, xv_2 \in E(G)\}$ for $x \in V(G-H)$,
and let
$Y(H)= \{u \in V(G-H) : \text{ $d_{H}(u)  \ge \alpha(G)$}\}$.

\begin{lem}
\label{D cup Q is hamilton}
Let $D$ be a cycle of a graph $G$. 
Let $k$ be a positive integer
and
let 
$Q_{1}, Q_{2},\ldots, Q_{k}$ be paths of $G - D$ with fixed orientations  
such that $V(Q_{i}) \cap V(Q_{j}) = \emptyset$ for $1 \le i<j \le k$. 
If the following {\rm(I)} and {\rm(II)} hold, 
then $G[V(D \cup Q_{1} \cup Q_{2}\cup \cdots \cup Q_{k})]$ is Hamiltonian.
\begin{enumerate}[{\upshape(I)}]
\item 
For $1 \le i  \le k$ and $a \in V(Q_{i})$,
$a \in X(D)\cup Y(\iR{Q_{i}}{a}{b_{i}} \cup D)$, 
where $b_{i}$ is the last vertex of $Q_{i}$. 
\item 
For $1 \le i<j \le k$, $x \in V(Q_{i})$ and $y \in V(Q_{j})$,
$I(x; D) \cap I(y; D) = \emptyset$.
\end{enumerate}
\end{lem}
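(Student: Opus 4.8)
The engine of the argument is the following extension fact, which is exactly where $\alpha(G)$ enters: if $C$ is a cycle (with its fixed orientation) and $v\in V(G)\sm V(C)$ satisfies $d_{C}(v)\ge \alpha(G)$, then $G[V(C)\cup\{v\}]$ has a Hamiltonian cycle. To prove this, put $N=N_{C}(v)$. If some $u\in N$ has $u^{+}\in N$, then $v$ is insertible into the edge $uu^{+}$ and we are done. Otherwise $v$ is adjacent to no vertex of $N^{+}$ and $|N^{+}|=|N|\ge\alpha(G)$. If two vertices $u^{+},w^{+}\in N^{+}$ are adjacent, then deleting $uu^{+}$ and $ww^{+}$ from $C$ and adding the three edges $u^{+}w^{+}$, $vu$, $vw$ yields a cycle through all of $V(C)\cup\{v\}$; and if instead $N^{+}$ is independent, then $\{v\}\cup N^{+}$ is an independent set of size $1+\alpha(G)$, a contradiction. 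Thus \emph{any} vertex with at least $\alpha(G)$ neighbours on a cycle can be absorbed into it.

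With this in hand, the plan is to build the Hamiltonian cycle by absorbing the vertices of $Q_{1},\dots,Q_{k}$ into $D$ one path at a time and, within each path, from its last vertex $b_{i}$ backward toward its first. The reason to go backward is that condition (I) phrases the high-degree alternative in terms of $\iR{Q_{i}}{a}{b_{i}}\cup D$, precisely the tail of $Q_{i}$ beyond $a$ together with $D$. If that whole tail has already been absorbed into the current cycle $C$, then $V(\iR{Q_{i}}{a}{b_{i}}\cup D)\subseteq V(C)$, so the bound $d(a)\ge\alpha(G)$ transfers verbatim to $C$ and the extension fact applies. It is important here that the degree bound is a statement about a \emph{vertex set}, not about which edges survive, so the rerouting performed in earlier absorptions cannot spoil it. Vertices $a$ that fall under the first alternative $a\in X(D)$ are instead handled by genuine insertion into an edge of $D$.

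Condition (II) is what lets the paths be handled essentially independently. Since $I(x;D)\cap I(y;D)=\emptyset$ whenever $x$ and $y$ lie on different paths, no edge of $D$ is an insertion site for vertices of two different $Q_{i}$, so the insertions performed for distinct paths never compete for the same edge of $D$; the vertices already absorbed from other paths simply sit harmlessly on the cycle. I would therefore organise the whole proof as an induction on $\sum_{i}|V(Q_{i})|$, at each step removing a terminal piece of some $Q_{k}$, absorbing it, and checking that the shortened configuration again satisfies (I) and (II) for the enlarged cycle $D'$. The $Y$-alternative of (I) survives because, for a remaining vertex $a\in Q_{k}$, the set $\iR{Q_{k}}{a}{b'}\cup D'$ has the \emph{same} vertex set as $\iR{Q_{k}}{a}{b_{k}}\cup D$; and the $X$-alternative survives for vertices of the other paths precisely because (II) keeps their insertion edges disjoint from the edge just consumed.

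The main obstacle is the remaining interaction, \emph{within a single path}, between insertible and high-degree vertices: when a vertex $a\in X(D)$ must be inserted into a specific edge $v_{1}v_{2}$ of $D$, that edge may already have been subdivided by the absorption of later vertices of the same $Q_{k}$, so absorbing $b_{k}$ alone and inducting can strand an $X(D)$-only vertex of $Q_{k}$ that competes for the same edge. The resolution I would pursue is to absorb a \emph{maximal terminal run} of $Q_{k}$ at once, threading it through a single edge of $D$: if $a\,a^{+}\cdots b_{k}$ is such a run with $a$ adjacent to $v_{1}$ and $b_{k}$ adjacent to $v_{2}$, then the path edges themselves carry the intervening (possibly high-degree) vertices, replacing $v_{1}v_{2}$ by $v_{1}\,a\cdots b_{k}\,v_{2}$ in one stroke. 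Making this precise — choosing the order of absorption so that each insertion target is still intact when it is needed, and verifying that (I) and (II) genuinely persist for $D'$ after each such modification so that the backward induction can continue — is the technical heart of the argument, and I expect it to require the most care.
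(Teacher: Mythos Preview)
Your extension fact is correct and is exactly the mechanism the paper uses at the end. The gap is in the inductive maintenance of (I) after a $Y$-type absorption. You claim that ``the $X$-alternative survives for vertices of the other paths precisely because (II) keeps their insertion edges disjoint from the edge just consumed,'' but (II) only controls edges in $I(b_{k};D)$. When $b_{k}$ is absorbed via the extension fact, the edges you delete are $z_{1}z_{1}^{+}$ and $z_{2}z_{2}^{+}$ for some $z_{1},z_{2}\in N_{D}(b_{k})$; there is no reason these lie in $I(b_{k};D)$, so (II) gives no protection at all. Such an edge may well be the unique insertion edge of some $a\in X(D)$ on another path (or earlier on $Q_{k}$ itself), and then (I) fails for $D'$ and the induction breaks. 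Your ``maximal terminal run'' fix only treats the case where the terminal piece can be threaded through a single edge of $D$; it says nothing when $b_{k}$ is $Y$-only, which is precisely the dangerous case.

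The paper sidesteps this by refusing to interleave the two absorption mechanisms. It first inserts \emph{all} $X(D)$-vertices in one phase: for each $Q_{i}$ it takes the first $X(D)$-vertex $u$ along $Q_{i}$ and the last $X(D)$-vertex $v$ with $I(u;D)\cap I(v;D)\neq\emptyset$, and threads the whole segment $Q_{i}[u,v]$ through one edge of that intersection; by the choice of $u,v$ that edge is not in $I(x;D)$ for any remaining $x\in Q_{i}$, and by (II) it is not in $I(y;D)$ for any $y$ on another path, so every still-needed insertion edge survives and the process can be iterated. Only after every $X(D)$-vertex is on the cycle does the paper invoke the extension fact, and then it does not try to preserve (I) or (II) at all: it simply takes a \emph{longest} cycle $C$ containing $V(D)$ and all $X(D)$-vertices, picks the last missing vertex $w$ on some $Q_{i}$, observes $w\notin X(D)$ forces $w\in Y(\,Q_{i}(w,b_{i}]\cup D\,)\subseteq Y(C)$, and gets a longer cycle---a contradiction. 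Separating the phases and replacing your induction hypothesis by a maximality argument is what makes the $Y$-step harmless.
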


\begin{proof}
We can easily see that 
$G[V(D \cup Q_{1} \cup Q_{2}\cup \cdots \cup Q_{k})]$ contains a cycle $D^{*}$ 
such that $V(D) \cup \big( X(D) \cap V(Q_1 \cup Q_2 \cup \cdots \cup Q_{k}) \big) \subseteq V(D^{*})$. 
In fact,
we can insert 
all vertices of $X(D) \cap V(Q_{1})$ into $D$
by choosing the following 
$u_{1}, v_{1} \in V(Q_{1})$ and 
$w_{1}w_{1}^{+} \in E(D)$ inductively.
Take the first vertex $u_{1}$ in $X(D) \cap V(Q_{1})$ 
along the orientation of $Q_{1}$,
and let $v_{1}$ be the last vertex in $X(D) \cap V(Q_{1})$ on $Q_{1}$ 
such that $I(u_{1}; D) \cap I(v_{1}; D) \neq \emptyset$.
Then we can insert 
all vertices of $\IR{Q_{1}}{u_{1}}{v_{1}}$
into $D$. 
To be exact,
taking $w_{1}w_{1}^{+} \in I(u_{1}; D) \cap I(v_{1}; D)$, 
$D_{1}^{1}:=w_{1}\IR{Q_{1}}{u_{1}}{v_{1}}\IR{D}{w_{1}^{+}}{w_{1}}$ 
is such a cycle.
By the choice of $u_{1}$ and $v_{1}$, 
$w_{1}w_{1}^{+} \notin I(x; D)$ for all $x \in V(Q_{1} - \IR{Q_{1}}{u_{1}}{v_{1}})$, 
and 
$X(D) \cap V(Q_{1} - \IR{Q_{1}}{u_{1}}{v_{1}})$
is contained in some component of $Q_{1} - \IR{Q_{1}}{u_{1}}{v_{1}}$.
Moreover,
note that $E(D) \sm \{w_{1}w_{1}^{+}\} \subseteq E(D_{1}^{1})$.
Hence by repeating this argument, 
we can obtain a cycle $D_{1}^{*}$ of $G[V(D \cup Q_1)]$
such that $V(D) \cup \big( X(D) \cap V(Q_{1}) \big) \subseteq V(D_{1}^{*})$ 
and $E(D) \setminus \bigcup_{x \in V(Q_{1})}I(x; D) \subseteq E(D_{1}^{*})$.
Then by (II),
$I(x;D) \subseteq E(D_{1}^{*})$ for all $x \in V(Q_{2} \cup \cdots \cup Q_{k})$.
Therefore
$G[V(D \cup Q_{1} \cup Q_{2}\cup \cdots \cup Q_{k})]$ contains a cycle $D^{*}$ 
such that 
$V(D) \cup \big( X(D) \cap V(Q_1 \cup Q_2 \cup \cdots \cup Q_{k}) \big) \subseteq V(D^{*})$. 

We choose a cycle $C$ of $G[V(D \cup Q_{1} \cup Q_{2}\cup \cdots \cup Q_{k})]$ 
containing all vertices in $V(D) \cup \big( X(D) \cap V(Q_1 \cup Q_2 \cup \cdots \cup Q_{k}) \big)$
so that $|C|$ is as large as possible.
Now, we change the ``base'' cycle from $D$ to $C$,
and use the symbol ${(\cdot)}^{+}$ for the orientation of $C$.
Suppose that 
$V(Q_{i} \ms C) \neq \emptyset$ for some $i$ with $i \in \{1, 2,\ldots,k\}$. 
We may assume that $i = 1$.
Let $w$ be the last vertex in $V(Q_{1} \ms C)$ along $Q_{1}$. 
Since $C$ contains all vertices in $X(D) \cap V(Q_{1})$,
it follows from (I) that $w \in Y(Q_{1}(w,b_1] \cup D)$,
that is,
$|N_{G}(w) \cap V(Q_{1}(w,b_1] \cup D)| \ge \alpha(G)$. 
By the choice of $w$,
we obtain
$V(Q_{1}(w,b_1] \cup D) \subseteq V(C) $. 
Therefore
$|N_{C}(w)^{+} \cup \{w\}| \ge |N_{G}(w) \cap V(Q_{1}(w,b_1] \cup D)| + 1 \ge \alpha(G) + 1$. 
This implies that 
$N_{C}(w)^{+} \cup \{w\}$ is not an independent set in $G$. 
Hence
$wz^{+} \in E(G)$ for some $z \in N_{C}(w)$ 
or
$z_{1}^{+}z_{2}^{+} \in E(G)$ for some distinct $z_{1}, z_{2} \in N_{C}(w)$. 
In the former case, let 
$C' = w\IR{C}{z^{+}}{z}w$, 
and in the latter case, 
let $C' = w\IR{\ola{C}}{z_{1}}{z_{2}^{+}}\IR{C}{z_{1}^{+}}{z_{2}}w$. 
Then $C'$ 
is a cycle of $G[V(D \cup Q_{1} \cup Q_{2}\cup \cdots \cup Q_{k})]$ such that 
$V(C) \cup \{w\} \subseteq V(C')$, which contradicts the choice of $C$. 
Thus $V(Q_{1} \cup Q_{2}\cup \cdots \cup Q_{k})$ are contained in $C$, 
and  hence $C$ is a Hamiltonian cycle of 
$G[V(D \cup Q_{1} \cup Q_{2}\cup \cdots \cup Q_{k})]$. 
\end{proof}

In the rest of this section, 
we fixed the following notation. 
Let $C$ be a longest cycle in a graph $G$, 
and $H_{0}$ be a component of $G \ms C$. 
For $u \in N_{C}(H_{0})$, 
let $u' \in N_{C}(H_{0})$ be a vertex 
such that $\ir{C}{u}{u'}  \cap N_{C}(H_{0})=\emptyset$,
that is,
$u'$ is the successor of $u$ in $N_C(H_{0})$ along the orientation of $C$.

For $u \in N_{C}(H_{0})$, 
a vertex $v \in \ir{C}{u}{u'}$ 
is \textit{insertible}
if $v \in X(\IR{C}{u'}{u}) \cup Y(\iR{C}{v}{u})$.
A vertex in $\ir{C}{u}{u'}$ is said to be \textit{non-insertible}
if it is not insertible.

\begin{lem}
\label{non-insertible}
There exists a non-insertible vertex
in $\ir{C}{u}{u'}$ for $u \in N_{C}(H_{0})$. 
\end{lem}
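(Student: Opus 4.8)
The plan is to argue by contradiction, playing the maximality of $C$ against Lemma \ref{D cup Q is hamilton}. Assume every vertex of $\ir{C}{u}{u'}$ is insertible. Since $u,u' \in N_{C}(H_{0})$ and $H_{0}$ is connected, I first build a cycle $D$ that bypasses the segment $\ir{C}{u}{u'}$ by detouring through $H_{0}$: choose $a \in N_{H_{0}}(u)$ and $b \in N_{H_{0}}(u')$, join them by a path $R$ inside $H_{0}$, and set $Q := u\,a\,R\,b\,u'$, a path from $u$ to $u'$ all of whose internal vertices lie in $V(H_{0})$. Replacing the arc $\IR{C}{u}{u'}$ of $C$ by $Q$ yields a cycle $D = Q \cup \IR{C}{u'}{u}$ of $G$, with $V(D) = (V(C)\setminus \ir{C}{u}{u'}) \cup V(R)$ and $|V(R)| \ge 1$ because $a \notin V(C)$. (Here I use $u \ne u'$, i.e.\ $|N_{C}(H_{0})| \ge 2$, which holds under the connectivity hypothesis since $N_{C}(H_{0})$ is a cutset.)

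Next I set $Q_{1} := \ir{C}{u}{u'}$, regarded as a path with its $C$-orientation and last vertex $b_{1} = (u')^{-}$; it is a path of $G - D$ since its vertices are exactly those deleted from $C$ in forming $D$. I then invoke Lemma \ref{D cup Q is hamilton} with this $D$ and the single path $Q_{1}$. Condition (II) is vacuous (only one path), so the entire content is to check condition (I): for each $v \in \ir{C}{u}{u'}$ I must show $v \in X(D) \cup Y(\iR{Q_{1}}{v}{b_{1}} \cup D)$. This is precisely the point at which the \emph{definition of insertible} is translated into the hypothesis of the lemma. If $v \in X(\IR{C}{u'}{u})$, then a witnessing edge $v_{1}v_{2}$ lies on the arc $\IR{C}{u'}{u}$, which survives intact as a subpath of $D$; hence $v_{1}v_{2} \in E(D)$ and $v \in X(D)$. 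If instead $v \in Y(\iR{C}{v}{u})$, then $\iR{C}{v}{u}$ consists of the vertices of $\ir{C}{u}{u'}$ lying after $v$ together with the arc $\IR{C}{u'}{u}$, and is therefore contained in $\iR{Q_{1}}{v}{b_{1}} \cup V(D)$; monotonicity of the degree count then gives $v \in Y(\iR{Q_{1}}{v}{b_{1}} \cup D)$.

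With (I) established, Lemma \ref{D cup Q is hamilton} yields that $G[V(D \cup Q_{1})] = G[V(C) \cup V(R)]$ is Hamiltonian, i.e.\ it contains a cycle through all of $V(C) \cup V(R)$. As $|V(R)| \ge 1$, this cycle is strictly longer than $C$, contradicting the choice of $C$ as a longest cycle. Hence not every vertex of $\ir{C}{u}{u'}$ is insertible, which is the assertion of the lemma; in particular $\ir{C}{u}{u'} \ne \emptyset$, the degenerate case $(u)^{+} = u'$ being already excluded since there the detour $D$ alone is a cycle longer than $C$.

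The main obstacle is the bookkeeping in the second paragraph: the argument turns entirely on the two containments $X(\IR{C}{u'}{u}) \subseteq X(D)$ and $\iR{C}{v}{u} \subseteq \iR{Q_{1}}{v}{b_{1}} \cup V(D)$, which demand care because $D$ agrees with $C$ off the $H_{0}$-detour but not on it; one must verify that no insertion edge witnessing the $X$-condition and no $Y$-neighbor is lost in passing from $C$ to $D$. Everything else — constructing the detour and appealing to maximality — is routine.
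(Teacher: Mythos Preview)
Your proof is correct and follows essentially the same route as the paper's: suppose all vertices of $\ir{C}{u}{u'}$ are insertible, form the cycle $D$ by replacing the arc $\IR{C}{u}{u'}$ with a $C$-path through $H_{0}$, take $Q_{1}=\ir{C}{u}{u'}$, verify condition~(I) of Lemma~\ref{D cup Q is hamilton} via the containments $\IR{C}{u'}{u}\subseteq D$ and $\iR{C}{v}{u}\subseteq \iR{Q_{1}}{v}{b_{1}}\cup V(D)$, and obtain a longer cycle. The paper's argument is the same, only more tersely written; your additional remarks on $u\ne u'$ and the degenerate case $u^{+}=u'$ are sound but not present there.
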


\begin{proof}
Let $u \in N_{C}(H_{0})$, 
and suppose that 
every vertex in $\ir{C}{u}{u'}$ is insertible. 
Let 
$P$ be a $C$-path joining $u$ and $u'$ with 
$V(P) \cap V(H_{0}) \neq \emptyset$. 
Let
$D = \IR{C}{u'}{u}P[u,u']$ 
and $Q = \ir{C}{u}{u'}$.
Let $v \in V(Q)$.
Since $v$ is insertible,
it follows that
$v \in X(\IR{C}{u'}{u}) \cup Y(C(v,u])$.
Since $\IR{C}{u'}{u}$ is a subpath of $D$,
we have
$v \in X(D) \cup Y(Q(v,u') \cup D)$.
Hence, by Lemma \ref{D cup Q is hamilton}, 
$G[V(D \cup Q)]$ is Hamiltonian, 
which contradicts the maximality of $C$.  
\end{proof}

\begin{figure}[h]
\begin{center}
\includegraphics{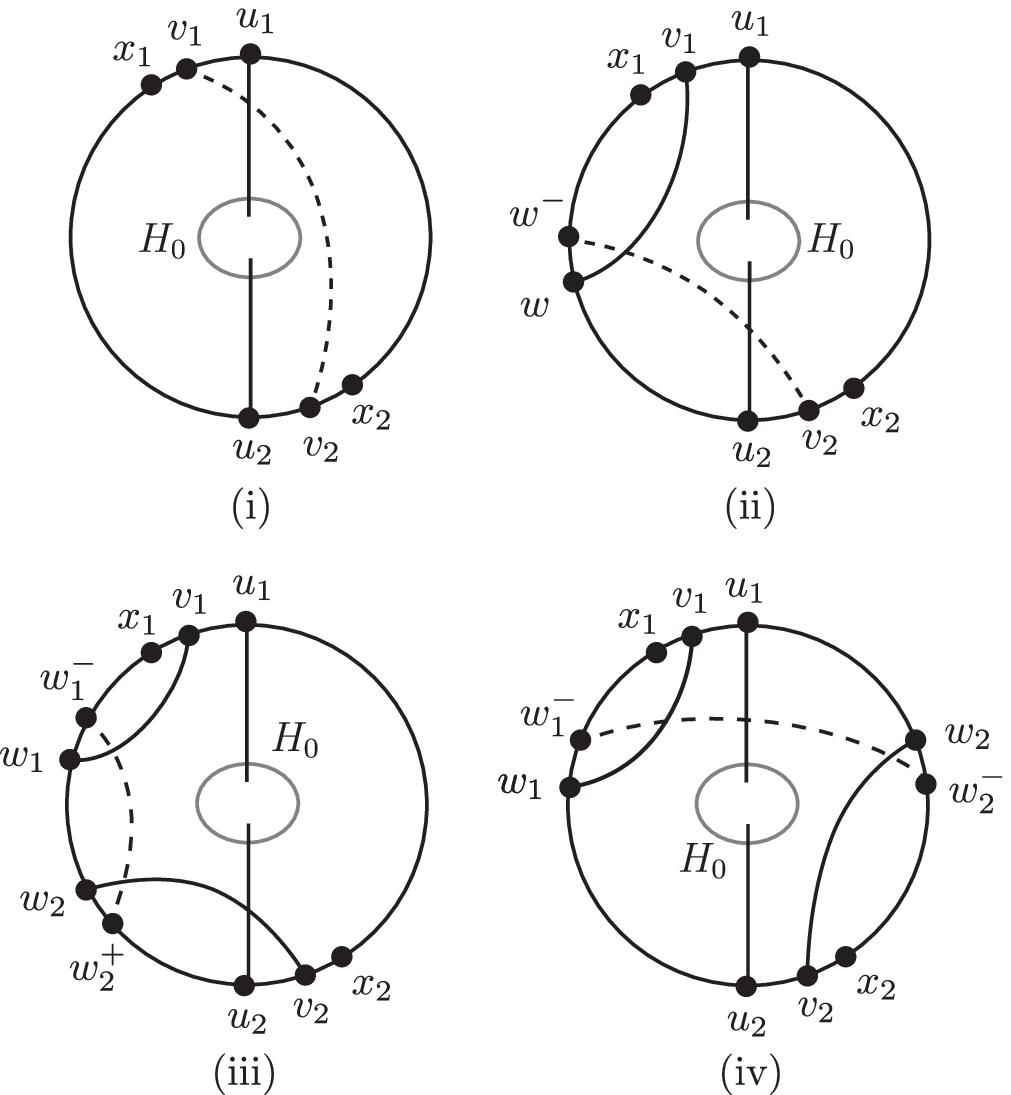}
\caption{Lemma \ref{insertible}}
\label{insertible-pict2}
\end{center}
\end{figure}

\begin{lem}
\label{insertible}
Let $u_{1}, u_{2} \in N_{C}(H_{0})$ with $u_{1} \neq u_{2}$, 
and let $x_{i}$ be the first non-insertible vertex along $\ir{C}{u_i}{u_i'}$ for $i \in \{ 1, 2\}$.  
Then the following hold (see Figure \ref{insertible-pict2}).
\begin{enumerate}[{\upshape (i)}]

\item\label{crossing}
There exists no $C$-path joining $v_{1} \in \iR{C}{u_{1}}{x_{1}}$ and $v_{2} \in \iR{C}{u_{2}}{x_{2}}$.
In particular,
$x_{1}x_{2} \not\in E(G)$.  

\item\label{crossing2}
If there exists a $C$-path joining $v_{1} \in \iR{C}{u_{1}}{x_{1}}$ 
and $w \in \iR{C}{v_{1}}{u_2}$, 
then there exists no $C$-path joining $v_{2} \in \iR{C}{u_{2}}{x_{2}}$ and $w^{-}$. 

\item\label{W1}
If there exist a $C$-path joining $v_{1} \in \iR{C}{u_{1}}{x_{1}}$ and $w_{1} \in \ir{C}{v_{1}}{u_{2}}$ 
and a $C$-path joining $v_{2} \in \iR{C}{u_{2}}{x_{2}}$ and $w_{2} \in \Ir{C}{w_{1}}{u_{2}}$, 
then there exists no $C$-path joining $w_{1}^{-}$ and $w_{2}^{+}$. 

\item\label{W2}
If for each $i \in \{1, 2\}$, 
there exists a $C$-path joining $v_{i} \in \iR{C}{u_{i}}{x_{i}}$ and $w_{i} \in \iR{C}{v_{i}}{u_{3-i}}$, 
then there exists no $C$-path joining $w_{1}^{-}$ and $w_{2}^{-}$. 

\end{enumerate}
\end{lem}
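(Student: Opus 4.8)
The plan is to prove all four parts by contradiction, in each case assuming that the forbidden $C$-path(s) exist and then producing a cycle of $G$ that spans $V(C)$ together with at least one vertex of $H_{0}$, contradicting the maximality of $C$. The common engine is Lemma \ref{D cup Q is hamilton}: I will exhibit a base cycle $D$ (built from the assumed $C$-path(s), a path through $H_{0}$ joining the relevant vertices $u_{i}$, and suitable arcs of $C$) together with the arcs of insertible vertices that $D$ omits, and then verify conditions (I) and (II) so that Lemma \ref{D cup Q is hamilton} reinserts those vertices and yields the desired long cycle. The crucial resource is that every vertex of $\ir{C}{u_i}{x_i}$ is insertible, since $x_{i}$ is the \emph{first} non-insertible vertex of $\ir{C}{u_i}{u_i'}$.

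For part (\ref{crossing}), suppose a $C$-path $R$ joins $v_{1}\in\iR{C}{u_1}{x_1}$ and $v_{2}\in\iR{C}{u_2}{x_2}$. Since $u_{1},u_{2}\in N_{C}(H_{0})$ and $H_{0}$ is connected, there is a $C$-path $P$ from $u_{1}$ to $u_{2}$ whose interior lies in $H_{0}$. Because $x_{1},x_{2}$ lie in the two disjoint arcs between consecutive neighbours, the cyclic order along $C$ is $u_{1},v_{1},u_{2},v_{2}$, so the chords $P$ (joining $u_{1},u_{2}$) and $R$ (joining $v_{1},v_{2}$) cross. The arcs $\IR{C}{v_1}{u_2}$ and $\IR{C}{v_2}{u_1}$ contain $x_{1}$ and $x_{2}$ and cannot in general be reinserted, so they must be kept; the unique way to combine both chords with arcs of $C$ into a single cycle while retaining these two arcs is
\[
D=\IR{C}{v_2}{u_1}\,P[u_{1},u_{2}]\,\IL{C}{u_2}{v_1}\,R[v_{1},v_{2}].
\]
This $D$ omits exactly the arcs $\ir{C}{u_1}{v_1}$ and $\ir{C}{u_2}{v_2}$, whose vertices are all insertible; taking these as $Q_{1},Q_{2}$ and checking (I), (II), Lemma \ref{D cup Q is hamilton} gives a Hamiltonian cycle of $G[V(D\cup Q_{1}\cup Q_{2})]$, whose vertex set properly contains $V(C)$ (it also contains the interior vertices of $P$ in $H_{0}$), and which is therefore longer than $C$. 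The ``in particular'' statement is the special case $v_{1}=x_{1}$, $v_{2}=x_{2}$, an edge being a $C$-path.

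To make the reinsertion legitimate I will choose $R$ so that $v_{1}$ is as close to $u_{1}$, and $v_{2}$ as close to $u_{2}$, as possible along $C$. With this extremal choice, no vertex of $\ir{C}{u_1}{v_1}$ can have a neighbour in $\iR{C}{u_2}{x_2}$ (otherwise a shorter admissible path would exist), and symmetrically; this forces the insertion edge witnessing $a\in X(\cdot)$, respectively the $\alpha$ neighbours witnessing the $Y(\cdot)$ condition, to lie on $D$ rather than in the other omitted arc, which is exactly what (I) demands, while (II) is then read off from the interval structure. Parts (\ref{crossing2}), (\ref{W1}) and (\ref{W2}) are treated in the same spirit and in this order, each assuming its hypotheses and, where needed, the conclusions of the earlier parts: one now has three chords (the $H_{0}$-path $P$ together with the two assumed $C$-paths), and the special incidences in the hypotheses — for instance, that in (\ref{crossing2}) the path to $v_{2}$ ends at the immediate predecessor $w^{-}$ — are precisely what allow the three chords to be spliced, via the edge $w^{-}w$, into a single base cycle retaining the non-insertible vertices.

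The main obstacle is the verification of (I) and (II) of Lemma \ref{D cup Q is hamilton}, rather than the routing of $D$ itself. Because $D$ deliberately deletes arcs of $C$, an insertible vertex $a$ of one deleted arc could a priori have its only insertion edge inside the \emph{other} deleted arc, so that $a\notin X(D)$ and (I) fails; likewise two vertices from different deleted arcs might compete for a common edge of $D$, violating (II). Ruling these out is where the extremal choice of the endpoints of the assumed $C$-path(s), together with the already-proved parts of the lemma, must be used carefully; once (I) and (II) are secured, the contradiction with the maximality of $C$ is immediate.
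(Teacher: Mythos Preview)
Your overall strategy --- contradiction via a base cycle $D$ through $H_{0}$ plus reinsertion of the deleted insertible arcs using Lemma~\ref{D cup Q is hamilton} --- is exactly the paper's approach, and your routing of $D$ in part~(\ref{crossing}) coincides with the paper's. The gap is in your verification of condition~(II) of Lemma~\ref{D cup Q is hamilton}, which you dismiss as ``read off from the interval structure''. It is not: if $x\in Q_{1}=\ir{C}{u_1}{v_1}$ and $y\in Q_{2}=\ir{C}{u_2}{v_2}$ share an edge $zz^{+}\in I(x;D)\cap I(y;D)$ lying on $C$ with, say, $z^{+}\in\iR{C}{x}{u_2}$, then the edges $xz^{+}$ and $yz=y(z^{+})^{-}$ form precisely a configuration of type~(\ref{crossing2}) (with $v_{1}\to x$, $w\to z^{+}$, $v_{2}\to y$). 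Your extremal choice for~(\ref{crossing}) minimises only over $C$-paths joining $\iR{C}{u_1}{x_1}$ to $\iR{C}{u_2}{x_2}$; the pair $xz^{+},\,yz$ produces no such $C$-path (the would-be internal vertex $z$ lies on $C$), so this smaller configuration does not contradict your minimality, and~(II) can genuinely fail.

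The paper avoids this by proving (\ref{crossing}) and (\ref{crossing2}) \emph{simultaneously}: it assumes that for some $v_{1},v_{2}$ either (a) a $C$-path joins $v_{1}$ to $v_{2}$, or (b) disjoint $C$-paths realise the configuration of~(\ref{crossing2}), and then minimises $|\IR{C}{u_1}{v_1}|+|\IR{C}{u_2}{v_2}|$ over both alternatives at once. A shared insertion edge now yields a type-(b) instance with strictly smaller sum, contradicting the joint minimality, whence $I(x;C)\cap I(y;C)=\emptyset$ and~(II) follows. Once (\ref{crossing}) and (\ref{crossing2}) are in hand, parts (\ref{W1}) and (\ref{W2}) can indeed be handled separately, invoking the earlier parts exactly as you propose; but your sequential treatment of (\ref{crossing}) before (\ref{crossing2}) does not go through as written.
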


\begin{proof} 
Let $P_{0}$ be a $C$-path which connects $u_1$ and $u_2$, and $V(P_{0}) \cap V(H_{0}) \neq \emptyset$. 
We first show (\ref{crossing}) and (\ref{crossing2}). 
Suppose that 
the following {\rm(a)} or {\rm(b)} holds 
for some $v_{1} \in \iR{C}{u_{1}}{x_{1}}$ and some $v_{2} \in \iR{C}{u_{2}}{x_{2}}$: 
(a) There exists a $C$-path $P_{1}$ joining $v_{1}$ and $v_{2}$. 
(b) There exist disjoint $C$-paths $P_{2}$ joining $v_{l}$ and $w$, and $P_{3}$ joining $v_{3-l}$ and $w^{-}$ 
for some $l \in \{1, 2\}$ and some $w \in \iR{C}{v_{l}}{u_{3-l}}$. 
We choose such vertices $v_{1}$ and $v_{2}$ so that $|\IR{C}{u_{1}}{v_{1}}| + |\IR{C}{u_{2}}{v_{2}}|$ is as small as possible. 
Without loss of generality, 
we may assume that $l = 1$ if {\rm(b)} holds. 
Since $N_{C}(H_{0})\cap \{v_{1},v_{2}\}=\emptyset$,
$(V(P_{1}) \cup V(P_{2}) \cup V(P_{3})) \cap V(P_{0})= \emptyset$.
Therefore, we can define a cycle 
\begin{align*}
D = 
\left \{
\begin{array}{ll} 
P_{1}[v_{1},v_{2}] \IR{C}{v_{2}}{u_{1}} P_{0}[u_{1},u_{2}]\IR{\ola{C}}{u_{2}}{v_{1}} & \textup{if \rm{(a)} holds,} \\[1mm]
P_{2}[v_{1},w] \IR{C}{w}{u_{2}} \IL{P_{0}}{u_{2}}{u_{1}}\IR{\ola{C}}{u_{1}}{v_{2}}P_{3}[v_{2},w^{-}] \IR{\ola{C}}{w^{-}}{v_{1}} 
&\textup{otherwise.}
\end{array}
\right.
\end{align*}

For $i \in \{1, 2\}$,
let $Q_{i} = \ir{C}{u_{i}}{v_{i}}$. 
By Lemma \ref{non-insertible},
we can obtain the following statement (1), and
by the choice of $v_{1}$ and $v_{2}$, 
we can obtain the following statements (2)--(5): 
 
\noindent
\begin{enumerate}[{\upshape(1)}]

\item 
$N_G(x) \cap \ir{P_{0}}{u_{1}}{u_{2}} = \emptyset$ 
for $x \in V(Q_{1} \cup Q_{2})$. 

\item  
$N_G(x) \cap (\ir{P_{1}}{v_{1}}{v_{2}} \cup \ir{P_{2}}{v_{1}}{w} \cup \ir{P_{3}}{v_{2}}{w^{-}}) = \emptyset$ 
for $x \in V(Q_{1} \cup Q_{2})$.

\item  
$xy \notin E(G)$ for $x \in V(Q_{1})$ and $y \in V(Q_{2})$.

\item 
$I(x;C) \cap I(y;C) = \emptyset$ for $x \in V(Q_{1})$ and $y \in V(Q_{2})$. 

\item  
If (b) holds, then
$w^{-}w \not \in I(x;C)$ for $x \in V(Q_1 \cup Q_2)$.
\end{enumerate}

Let $a \in V(Q_i)$ for some $i \in \{ 1,2 \}$. 
Note that
each vertex of $Q_{i}$ is insertible,
that is,
$a \in X(C[u_{i}',u_{i}]) \cup Y(C(a,u_i])$.
We show that
$a \in X(D) \cup Y(Q_i(a,v_i) \cup D)$.
If $a \in X(C[u_{i}',u_{i}])$,
then
the statements (3) and (5)
yield that $a \in X(D)$.
Suppose that
$a \in Y(C(a,u_i])$.
By (3),
$N_G(a) \cap C(a,u_i] \subseteq  N_G(a) \cap \big( Q_i(a,v_i) \cup D \big)$.
This implies that 
$a \in Y(\ir{Q_{i}}{a}{v_i} \cup D)$.
By (1), (2) and (4),
$I(x;D) \cap I(y;D) = \emptyset$ for $x \in V(Q_{1})$ and $y \in V(Q_{2})$.
Thus, by Lemma \ref{D cup Q is hamilton},
$G[V(D \cup Q_1 \cup Q_2)]$ is Hamiltonian, 
which contradicts the maximality of $C$.

By using similar argument as above, we can also show (\ref{W1}) and (\ref{W2}). 
We only prove (\ref{W1}).
Suppose that for some $v_{1} \in \iR{C}{u_{1}}{x_{1}}$ and $v_{2} \in \iR{C}{u_{2}}{x_{2}}$, 
there exist disjoint $C$-paths 
$\IR{P_{1}}{v_{1}}{w_{1}}$, $\IR{P_{2}}{v_{2}}{w_{2}}$ and $\IR{P_{3}}{w_{1}^{-}}{w_{2}^{+}}$ 
with $w_{1} \in \ir{C}{v_{1}}{u_{2}}$ and $w_{2} \in \Ir{C}{w_{1}}{u_{2}}$. 
We choose such $v_{1}$ and $v_{2}$ so that $|\IR{C}{u_{1}}{v_{1}}| + |\IR{C}{u_{2}}{v_{2}}|$
is as small as possible.
Let $Q_{i} = \ir{C}{u_{i}}{v_{i}}$ for $i \in \{1, 2\}$. 
Then by Lemma \ref{insertible} (\ref{crossing}), 
$xy \notin E(G)$ 
for $x \in V(Q_{1})$ and $y \in V(Q_{2})$.   
By the choice of $v_{1}$ and $v_{2}$ and Lemma \ref{insertible} (\ref{crossing2}), 
$w_{1}w_{1}^{-}, w_{2}w_{2}^{+} \notin I(x;\IR{C}{v_1}{u_1}) \cup I(y;\IR{C}{v_2}{u_2})$
for $x \in V(Q_{1})$ and $y \in V(Q_{2})$.
By Lemma \ref{insertible} (\ref{crossing}) and (\ref{crossing2}), 
$I(x;\IR{C}{v_1}{u_2} \cup \IR{C}{v_2}{u_1}) \cap I(y;\IR{C}{v_1}{u_2} \cup \IR{C}{v_2}{u_1}) = \emptyset$
for $x \in V(Q_{1})$ and $y \in V(Q_{2})$.
Hence by applying Lemma \ref{D cup Q is hamilton} 
as 
$$D = \IR{P_{1}}{v_{1}}{w_{1}} \IR{C}{w_{1}}{w_{2}} \IR{\ola{P_{2}}}{w_{2}}{v_{2}}\IR{C}{v_{2}}{u_{1}}\IR{P_0}{u_{1}}{u_{2}}
\IR{\ola{C}}{u_{2}}{w_{2}^{+}}\IR{\ola{P_{3}}}{w_{2}^{+}}{w_{1}^{-}}\IR{\ola{C}}{w_{1}^{-}}{v_{1}},$$ 
$Q_{1}$ and $Q_{2}$,
we see that there exits a longer cycle than $C$, a contradiction.
\end{proof}

\section{Proof of Theorem \ref{main}}

\begin{proof}[Proof of Theorem \ref{main}]

The cases $k=1$, $k=2$ and $k=3$ were shown 
by Fraisse and Jung \cite{FJ},
by Bauer et al.~\cite{BBVL}
and by Ozeki and Yamashita \cite{OY}, respectively.
Therefore, we may assume that $k \geq 4$.
Let $G$ be a graph
satisfying the assumption of Theorem \ref{main}.
By Theorem \ref{degresult} (iii),
we may assume
$\alpha(G) \geq \kappa(G)+1$.
Let $C$ be a longest cycle in $G$.
If $C$ is a Hamiltonian cycle of $G$,
then there is nothing to prove.
Hence we may assume that $G \ms V(C)\not=\emptyset$.
Let $H=G \ms V(C)$ and $x_{0} \in V(H)$.
Choose a longest cycle $C$ and $x_{0}$
so that 
\begin{center}
$\dg{C}{x_0}$ is as large as possible.
\end{center}
Let $H_{0}$ be the component of $H$ such that $x_0 \in V(H_{0})$.
Let $$N_{C}(H_{0}) =U= \{u_1,u_{2},\ldots,u_{m}\}.$$
Note that $m \ge \kappa(G) \ge k$.
Let
$$M_{0}=\{0,1,\ldots,m\}\text{ and } M_{1}=\{1,2,\ldots,m\}.$$
Let $u_i'$ be the vertex in $N_{C}(H_{0})$
such that $\ir{C}{u_i}{u_i'}  \cap N_{C}(H_{0})=\emptyset$.
By Lemma \ref{non-insertible}, 
there exists a non-insertible vertex in $\ir{C}{u_{i}}{u_{i}'}$.
Let $x_i \in \ir{C}{u_{i}}{u_{i}'}$
be the first non-insertible vertex along the orientation of $C$ 
for each $i \in M_{1}$,
and let 
$$X=\{x_1,x_2,\ldots,x_m\}.$$
Note that $\dg{C}{x_{0}} \le |U|=|X|$.
Let $$D_i=\ir{C}{u_i}{x_i}
\text{ for each $i \in M_{1}$, and }
D=\bigcup_{i \in M_{1}}D_{i}.$$

\paragraph{}
We check the degree of $x_{i}$ in $C$ and $H$.
Since $x_i$ is non-insertible,
we can see that
\begin{equation} \label{C}
\dg{C}{x_{i}} \le |D_{i}| +\alpha(G)-1 \text{\ \ for $i \in M_{1}$.}
\end{equation}
By the definition of $x_{i}$, 
we clearly have 
$N_{H_{0}}(x_{i})=\emptyset$ for $i \in M_{1}$.
Moreover, by Lemma \ref{insertible} (i),
$N_{H}(x_i) \cap N_{H}(x_j) =\emptyset$
for $i,j \in M_{1}$ with $i\not=j$.
Thus we obtain
\begin{equation}
\sum_{i\in M_{0}}
\dg{H}{x_i} \le |H|-1, \label{H}
\end{equation}
and 
\begin{equation}
\sum_{i\in M_{1}}
\dg{H}{x_i} \le |H|-|H_{0}|. \label{H2}
\end{equation}

We check the degree sum in $C$ of two vertices in $X$.
Let $i$ and $j$ be distinct two integers in $M_{1}$.
In this paragraph,
we 
let $C_i = \IR{C}{x_i}{u_{j}}$
and $C_j = \IR{C}{x_j}{u_{i}}$.
By Lemma \ref{insertible} (ii),
we have
$N_{C_i}(x_i)^- \cap N_{C_i}(x_{j})=\emptyset$
and
$N_{C_j}(x_j)^- \cap N_{C_j}(x_{i})=\emptyset$.
By Lemma \ref{insertible} (i),
$N_{C_i}(x_i)^- \cup N_{C_i}(x_{j}) \subseteq C_{i} \sm D$,
$N_{C_j}(x_j)^- \cup N_{C_j}(x_{i}) \subseteq C_{j} \sm D$ 
and 
$N_{D_i}(x_j) = N_{D_j}(x_i)=\emptyset$.
Thus, we obtain
\begin{equation}\label{2vertices}
\dg{C}{x_i} +\dg{C}{x_j}
\le |C|-\sum_{h \in M_{1}\sm \{i,j\}}|D_{h}| \text{\ \ for  $i, j \in M_{1}$  with  $i \neq j$}.
\end{equation}
\bigskip

By Lemma \ref{insertible} (i) and since $N_{H_{0}}(x_{i}) = \emptyset$ for $i \in M_{1}$, 
we obtain the following.

\begin{Claim}\label{Xindep}
$X\cup \{x_{0}\}$ is an independent set,
and hence $|X| \le \alpha(G)-1$.
\end{Claim}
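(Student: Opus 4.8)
The plan is to establish the two types of non-adjacency that make $X \cup \{x_{0}\}$ independent, and then to read off the cardinality bound from the definition of $\alpha(G)$. Since $X \subseteq V(C)$ while $x_{0} \in V(H_{0}) \subseteq V(H)$, the only pairs I must rule out are $\{x_{i}, x_{j}\}$ with $i, j \in M_{1}$ and $i \neq j$, and $\{x_{0}, x_{i}\}$ with $i \in M_{1}$.

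For the pairs inside $X$, fix distinct $i, j \in M_{1}$. Taking $v_{1} = x_{i} \in \iR{C}{u_{i}}{x_{i}}$ and $v_{2} = x_{j} \in \iR{C}{u_{j}}{x_{j}}$, I would invoke Lemma \ref{insertible} (\ref{crossing}), whose ``in particular'' conclusion gives $x_{i} x_{j} \notin E(G)$ (after the evident relabeling of $u_{i}, u_{j}$ as $u_{1}, u_{2}$). Ranging over all such pairs shows that $X$ is an independent set. For the pairs involving $x_{0}$, I would use the previously recorded fact that $N_{H_{0}}(x_{i}) = \emptyset$ for every $i \in M_{1}$, which holds because $x_{i} \in \ir{C}{u_{i}}{u_{i}'}$ lies outside $N_{C}(H_{0})$. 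Since $x_{i}$ then has no neighbour in $H_{0}$ and $x_{0} \in V(H_{0})$, it follows that $x_{0} x_{i} \notin E(G)$. Together these two steps prove that $X \cup \{x_{0}\}$ is independent.

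For the final inequality, note that $x_{0} \in V(H)$ and $X \subseteq V(C)$ are disjoint, so $x_{0} \notin X$ and $|X \cup \{x_{0}\}| = |X| + 1$. As $\alpha(G)$ is the maximum size of an independent set, $|X| + 1 \le \alpha(G)$, i.e.\ $|X| \le \alpha(G) - 1$. I do not expect any genuine obstacle here: all of the substance has already been carried by Lemma \ref{insertible} (\ref{crossing}) and by the earlier observation $N_{H_{0}}(x_{i}) = \emptyset$, so the claim is essentially a bookkeeping step, the only care being to track which vertices lie on $C$ and which lie in $H$.
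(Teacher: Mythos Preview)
Your proposal is correct and matches the paper's own proof exactly: the paper simply states that the claim follows from Lemma~\ref{insertible}~(\ref{crossing}) together with $N_{H_{0}}(x_{i})=\emptyset$ for $i\in M_{1}$, which are precisely the two ingredients you use. The cardinality bound $|X|\le \alpha(G)-1$ is then immediate, as you note.
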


\begin{Claim}\label{k+1}
$|X| \ge \kappa(G)+1$.
\end{Claim}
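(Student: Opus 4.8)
We must show $|X| = m = |N_C(H_0)| \geq \kappa(G) + 1$.

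\textbf{Goal.} Since $|X| = m = |N_C(H_0)| \ge \kappa(G) = \kappa$ is already known, it suffices to rule out $m = \kappa$; I will do this directly by exhibiting $k+1$ independent vertices whose degree sum is bounded above by $n + m - 1 + (k-2)(\alpha-1)$, and then comparing with the hypothesis $\sigma_{k+1} \ge n + \kappa + (k-2)(\alpha-1)$ to force $m \ge \kappa + 1$. The plan is to take $T = \{0\} \cup T'$ with $T' \subseteq M_1$ and $|T'| = k$, which is possible because $m \ge \kappa \ge k$. By Claim~\ref{Xindep} the vertices $\{x_i : i \in T\}$ form an independent set, and there are exactly $k+1$ of them (here $\alpha \ge \kappa+1 \ge k+1$ guarantees that $\sigma_{k+1}$ is finite and that enough independent vertices exist), so their degree sum is at least $\sigma_{k+1}$.

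First I would split each degree as $d_G(x_i) = d_C(x_i) + d_H(x_i)$. For the $H$-part, summing over $T \subseteq M_0$ and using nonnegativity together with (\ref{H}) gives $\sum_{i \in T} d_H(x_i) \le |H| - 1$. For the $C$-part, the vertex $x_0$ contributes $d_C(x_0) \le |U| = m$. The crux is the remaining $k$ vertices $x_i$ with $i \in T'$: I would apply the sharp pairwise estimate (\ref{2vertices}) to a single pair $i_1, i_2 \in T'$, obtaining $d_C(x_{i_1}) + d_C(x_{i_2}) \le |C| - \sum_{h \in M_1 \setminus \{i_1,i_2\}} |D_h|$, and the weaker per-vertex estimate (\ref{C}), namely $d_C(x_i) \le |D_i| + \alpha - 1$, for the other $k-2$ indices of $T'$.

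Since $T' \setminus \{i_1,i_2\} \subseteq M_1 \setminus \{i_1,i_2\}$ and all $|D_h| \ge 0$, the length terms $\sum_{i \in T'\setminus\{i_1,i_2\}}|D_i|$ coming from (\ref{C}) are dominated by the subtracted $\sum_{h \in M_1\setminus\{i_1,i_2\}}|D_h|$ from (\ref{2vertices}), so they cancel to a nonpositive quantity. This leaves $\sum_{i \in T'} d_C(x_i) \le |C| + (k-2)(\alpha-1)$. Assembling the pieces and using $|C| + |H| = n$ yields $\sum_{i \in T} d_G(x_i) \le m + |C| + (k-2)(\alpha-1) + |H| - 1 = n + m - 1 + (k-2)(\alpha-1)$. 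As this sum is at least $\sigma_{k+1} \ge n + \kappa + (k-2)(\alpha-1)$, I conclude $\kappa \le m - 1$, that is $|X| = m \ge \kappa + 1$.

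The main obstacle, and the reason (\ref{C}) cannot be used on its own, is that applying (\ref{C}) to all $k$ indices would cost $k(\alpha-1)$ on the $C$-side, overshooting the $(k-2)(\alpha-1)$ budget of the degree-sum hypothesis. The whole point is that exactly one use of the crossing estimate (\ref{2vertices}) trades two of the $\alpha-1$ loss terms for a single $|C|$, bringing the total loss down to precisely $(k-2)(\alpha-1)$. Checking that the segment-length terms $|D_h|$ cancel in the right direction (using $T' \subseteq M_1$ and nonnegativity) is the only delicate bookkeeping; everything else is a routine assembly of inequalities already established in the excerpt.
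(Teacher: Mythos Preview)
Your proof is correct and follows essentially the same approach as the paper: both take an independent set $\{x_i : i \in I\}$ of size $k+1$ containing $x_0$ and two distinguished indices, apply inequality~(\ref{2vertices}) once to that pair, inequality~(\ref{C}) to the remaining $k-2$ indices of $M_1$, and inequality~(\ref{H}) for the $H$-part, then compare with $\sigma_{k+1}$. The only cosmetic difference is that the paper leaves $d_C(x_0)$ as a variable and concludes $d_C(x_0) \ge \kappa(G)+1$ before invoking $|X| \ge d_C(x_0)$, whereas you substitute $d_C(x_0) \le m$ at the outset; the bookkeeping and the key idea are identical.
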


\begin{proof}
Let $s$ and $t$ be distinct two integers in $M_{1}$.
By the inequality (\ref{2vertices}),
we have
$$\dg{C}{x_s} +\dg{C}{x_t}  \le |C|-\sum_{i \in M_{1}\sm \{s,t\}}|D_{i}|.$$
Let $I$ be a subset of $M_{0}$
such that
$|I|=k+1$ and $\{0,s,t\} \subseteq I$.
By Claim \ref{Xindep},
$\{x_{i} : i \in I\}$
is an independent set.
By the inequality (\ref{C}),
we deduce
\begin{eqnarray*}
\sum_{i \in I\sm \{0,s,t\}}\dg{C}{x_i}
&\le&
\sum_{i \in I\sm \{0,s,t\}}|D_{i}|+(k-2)(\alpha(G)-1).
\end{eqnarray*}
By the inequality (\ref{H}) and the definition of $I$, 
we obtain
\begin{eqnarray*}
\sum_{i \in I}\dg{H}{x_i}
&\le&
|H| - 1.
\end{eqnarray*}
Thus,
it follows from
these three inequalities
that
$$\sum_{i \in I}\dg{G}{x_i}
\leq n+(k-2)(\alpha(G)-1)-1+\dg{C}{x_0}.$$
Since $\sigma_{k+1}(G) \geq n+\kappa(G)+(k-2)(\alpha(G)-1)$,
we have
$|X| \ge \dg{C}{x_0} \ge \kappa(G)+1$.
\end{proof}

\paragraph{}
Let $S$ be a cut set with $|S|=\kappa(G)$, and
let $V_1,V_2, \ldots , V_p$
be the components of $G \ms S$.
By Claim \ref{k+1},
we may assume that
\begin{center}
there exists an integer $l$ such that
$C[u_{l}, u_{l}')   \subseteq V_{1}.$
\end{center}
By Lemma \ref{insertible} (i),
we obtain
\begin{equation}\label{xl}
\dg{C}{x_l}
\le |C \cap (V_1 \cup  S)|-|(\bigcup_{i \in M_{1} \sm \{l\}}D_{i} \cup X) \cap (V_1 \cup  S)|.
\end{equation}

\paragraph{}
By replacing the labels $x_2$ and $x_3$ if necessary,
we may assume that
$x_1$, $x_2$ and $x_3$ appear
in this order along the orientation of $C$.
In this paragraph,
the indices are taken modulo $3$.
From now we let $$C_i=\IR{C}{x_i}{u_{i+1}}$$
and
$$W_i :=\{w \in V(C_i) : \text{$w^+ \in N_{C_i}(x_i)$ and $w^- \in N_{C_i}(x_{i+1})$} \}$$
for each $i\in \{1,2,3\}$,
and let $W:=W_1 \cup W_2 \cup W_3$
(see Figure \ref{DefW}).
Note that $W \cap (U \cup \{x_{1}, x_{2}, x_{3}\}) =\emptyset$,
by the definition of $C_{i}$ and $W_{i}$ and by Lemma \ref{insertible} (i).

\begin{figure}[h]
\begin{center}
\includegraphics{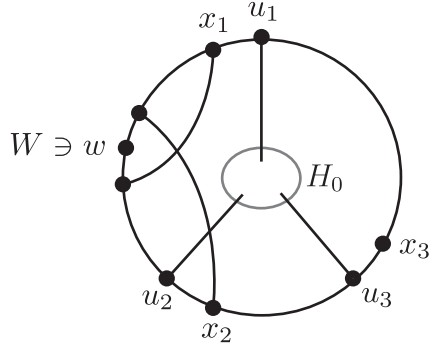}
\caption{The definition of $W$.}
\label{DefW}
\end{center}
\end{figure}

\begin{Claim}\label{U1}
$D \cup X \cup W \cup H \subseteq V_{1} \cup S$.
In particular,
$x_0 \in V_1 \cup S$.
\end{Claim}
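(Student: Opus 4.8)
The plan is to show that, after deleting $S$, the whole of $D\cup X\cup W\cup H$ collapses into the single component $V_1$, by anchoring each of its vertices to the arc $C[u_l,u_l')\subseteq V_1$ through a path of $G-S$. First I would record the clean facts about the chosen cut. Since $|S|=\kappa$, the cut is a minimum cut; by Claim \ref{k+1} there are at least $|X|-\kappa\ge 1$ of the $|U|=|X|$ arcs $C[u_i,u_i')$ meeting no vertex of $S$, and $C[u_l,u_l')$ may be taken to be one of them, so it is $S$-free and lies in $V_1$. Hence $u_l,x_l\in V_1\setminus S$, and (as already used for (\ref{xl})) $N_C(x_l)\subseteq V_1\cup S$. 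The claim is then equivalent to: no vertex of $H$, of $D\cup X$, or of $W$ lies in $V_2\cup\cdots\cup V_p$.

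The backbone is the component $H_0$, which is the common neighbour of all the attachment points $U$. I would first establish $V(H_0)\subseteq V_1\cup S$. The neighbours of $u_l$ inside $H_0$ lie in $V_1\cup S$, so the task is to rule out a component $A$ of $H_0-S$ with $A\subseteq V_j$ for some $j\ge 2$. Such an $A$ is a nonempty set off $C$ whose links to the rest of $G$ all run through $S$ or through $N_C(A)\subseteq U\cap(V_j\cup S)$; the plan is to feed $A$ together with suitable $U$-attachments into Lemma \ref{D cup Q is hamilton} to reroute $C$ through $A$, contradicting the maximality of $C$. Granting $V(H_0)\subseteq V_1\cup S$, I would propagate outward. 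By Lemma \ref{insertible}(i) there is no $C$-path between two distinct near-zones $(u_i,x_i]=D_i\cup\{x_i\}$, so a near-zone cannot escape into another $V_j$ except through $S$; combined with the hub $H_0$ this forces $D\cup X\subseteq V_1\cup S$. For $W$, each $w\in W_i$ satisfies $w^+\in N_C(x_i)$ and $w^-\in N_C(x_{i+1})$ with $x_i,x_{i+1}\in V_1\cup S$ now known, and Lemma \ref{insertible}(i)--(ii) pins $w^{\pm}$, hence $w$, inside $V_1\cup S$. Any component of $H$ other than $H_0$ is treated exactly like $H_0$. In particular $x_0\in V(H_0)\subseteq V_1\cup S$.

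The main obstacle is precisely the step $V(H_0)\subseteq V_1\cup S$ together with its analogue for the remaining components of $H$. Connectivity alone does not suffice, because $S$ may contain vertices lying off $C$: a component of $G-C$ can a priori straddle the cut, bridging $V_1$ to some $V_j$ through off-cycle vertices of $S$ with no single chord of $C$ realizing the crossing. Excluding this is exactly the sort of obstruction that non-insertibility measures, so the real work is in converting a hypothetical straddle into an honest cycle longer than $C$ --- selecting which $U$-attachments to route through and verifying that the rerouted cycle is strictly longer --- where Lemmas \ref{D cup Q is hamilton}--\ref{insertible} and the extremal choice of the pair $(C,x_0)$ must all be used together. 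I would expect this bookkeeping to absorb most of the effort.
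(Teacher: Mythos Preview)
Your plan is a purely structural/connectivity argument, and that is the wrong engine here. The paper's proof never builds a longer cycle in this claim; every part is a degree-sum count that ends in a violation of the hypothesis $\sigma_{k+1}\ge n+\kappa+(k-2)(\alpha-1)$. Concretely, to show $D\cup X\cup W\subseteq V_1\cup S$ the paper assumes some $v\in V_2$ in a near-zone or in $W$, pairs the bound $d_C(v)\le |C\cap(V_2\cup S)|-|(\bigcup_{i\ne h}D_i\cup X)\cap(V_2\cup S)|$ (from Lemma~\ref{insertible}(i)(ii)) with the bound~(\ref{xl}) for $x_l$, adds $k-2$ further $d_C(x_i)$'s via~(\ref{C}), and obtains $\sum d_G\le n+\kappa+(k-2)(\alpha-1)-1$. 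The steps for $H\setminus H_0$ and for $H_0$ are analogous counts; the $H_0$ step even \emph{uses} the already-proved fact $X\subseteq V_1\cup S$ to guarantee two vertices $x_l,x_s\in X\cap V_1$, so your proposed order (do $H_0$ first) would not feed into the paper's argument.

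The specific propagation steps you sketch also fail as written. Saying ``no $C$-path joins two near-zones'' does not prevent an entire near-zone $D_i\cup\{x_i\}$ from sitting inside $V_2$: if $u_i\in S$ and the successor $x_i^+$ is in $S\cup V_2$, nothing structural forces $D_i$ into $V_1$. Likewise for $W$: from $w^+\in N_C(x_i)$ you can only conclude $w^+\in V_1\cup S$ when $x_i\in V_1$, not when $x_i\in S$; and even if $w^-,w^+\in V_1\cup S$ you cannot conclude $w\in V_1\cup S$ when both neighbours happen to lie in $S$. Finally, your proposed rerouting for $H_0$ (``feed $A$ into Lemma~\ref{D cup Q is hamilton}'') is not a proof: a piece $A\subseteq H_0\cap V_2$ may attach to $C$ only at vertices of $U\cap S$ already used by the existing $H_0$-paths, and there is no evident way to gain length. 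The missing idea throughout is that the $\sigma_{k+1}$ hypothesis, not a longer-cycle contradiction, is what rules out vertices in $V_2\cup\cdots\cup V_p$.
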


\begin{proof}
We first show that $D \cup X \cup W \subseteq V_{1} \cup S$.
Suppose not.
Without loss of generality,
we may assume that
there exists an integer $h$ in $M_{1} \sm \{l\}$ such that
$\big(D_{h} \cup \{x_{h}\} \cup (W \cap \ir{C}{x_{h}}{u_{h}'})\big) \cap V_2\not=\emptyset,$ 
say $v \in 
\big(D_{h} \cup \{x_{h}\} \cup (W \cap \ir{C}{x_{h}}{u_{h}'})\big) \cap V_2$. 
Since $v \in V_{2}$,
it follows from Lemma \ref{insertible} (i) and (ii) 
that
$$
\dg{C}{v}
\le |C \cap (V_2 \cup  S)|-|(\bigcup_{i \in M_{1} \sm \{h\}} D_{i} \cup X) \cap (V_2 \cup  S)|.
$$

Let $I$ be a subset of $M_{0}\sm\{h\}$
such that
$|I|=k$ and $\{0,l\} \subseteq I$.
By Claim \ref{Xindep} and Lemma \ref{insertible} (i) and (ii),
$\{x_i : i \in I\} \cup \{v\}$ is an independent set
of order $k+1$.
By the above inequality
and the inequality (\ref{xl}),
we obtain 
\begin{eqnarray*}
\lefteqn{\dg{C}{x_l} +\dg{C}{v}}\\
& \le &
|C \cap (V_1 \cup V_2 \cup  S)|
+|C \cap S|
-|(\bigcup_{i \in M_{1} \sm \{l,h\}} D_{i} \cup X)\cap (V_1 \cup V_{2} \cup S)|\\
&=& |C| + |C \cap S| - |C \cap (\bigcup_{3 \le j \le p}V_j)|
-|(\bigcup_{i \in M_{1} \sm \{l,h\}} D_{i} \cup X)\cap (V_1 \cup V_{2} \cup S)|\\
&\le& |C| + |C \cap S| - |(\bigcup_{i \in M_{1} \setminus \{l, h\}}D_{i} \cup X) \cap (\bigcup_{3 \le j \le p}V_{j})|\\
&&{}-|(\bigcup_{i \in M_{1} \sm \{l,h\}} D_{i} \cup X)\cap (V_1 \cup V_{2} \cup S)|\\
&\le& |C| + \kappa(G) -\sum_{i \in M_{1} \sm \{l,h\}}|D_{i} \cap (\bigcup_{1 \le j \le p}V_{j} \cup S)|
-|X \cap (\bigcup_{1 \le j \le p}V_{j} \cup S)|\\
&\le& |C| + \kappa(G)  -\sum_{i\in I \sm \{0,l\}}|D_{i}|- |X|\\
&\le& |C| + \kappa(G)  -\sum_{i\in I \sm \{0,l\}}|D_{i}|- \dg{C}{x_0}.
\end{eqnarray*}
On the other hand,
the inequality (\ref{C}) yields that
$$\sum_{i\in I \sm \{0,l\}}\dg{C}{x_i} \le \sum_{i\in I \sm \{0,l\}}|D_{i}|+(k-2)(\alpha(G)-1).$$
By the above two inequalities, 
we deduce
$$\sum_{i \in I}\dg{C}{x_i}+\dg{C}{v}
\le |C| +\kappa(G)+(k-2)(\alpha(G)-1).$$
Recall that 
$\{x_i : i \in I\} \cup \{v\}$ is an independent set, 
in particular, $x_{0}\not\in \bigcup_{i \in I} N_{H}(x_{i}) \cup N_{H}(v)$. 
Since 
$ N_{H}(x_{i}) \cap N_{H}(x_{j}) = \emptyset$ for $i, j \in I$ with $i \neq j$
and
$(\bigcup_{i \in I} N_{H}(x_{i})) \cap N_{H}(v)=\emptyset$ 
by Lemma \ref{insertible} (i) and (ii), 
it follows that $\sum_{i \in I}\dg{H}{x_i}+\dg{H}{v} \le |H|-1$.
Combining this inequality with the above inequality, 
we get 
$\sum_{i \in I}\dg{G}{x_i}+\dg{G}{v}
\le n+\kappa(G)+(k-2)(\alpha(G)-1)-1$, 
a contradiction.

\paragraph{}
We next show that 
$H \ms H_{0} \subseteq V_1 \cup S$.
Suppose not.
Without loss of generality,
we may assume that 
there exists a vertex $y \in (H \ms H_{0}) \cap V_2$.
Let $H_{y}$ be a component of $H$ with $y \in V(H_{y})$.
Note that $H_{y}\not=H_{0}$.
Suppose that
$N_C(H_{y}) \cap (D_{h} \cup \{ x_h \}) \neq \emptyset$ for some $h \in M_{1} \sm \{l\}$.
Then
Lemma \ref{insertible} (i) yields that
$$d_C(y) \le |C \cap (V_2 \cup S)|-|(\bigcup_{i \in M_1 \sm \{ h \}} D_i \cup X) \cap (V_2 \cup S)|.$$
Hence, by the same argument as above,
we can obtain a contradiction. 
Thus
we may assume that 
$N_C(H_{y}) \cap (D_{i} \cup \{ x_i \}) = \emptyset$ for all $i \in M_{1} \sm \{l\}$.
Then,
since $y \in V_{2}$ and $D_{l} \cup \{x_{l}\} \subseteq V_{1}$,
we have
$$d_C(y) \le |C \cap (V_2 \cup S)|-|(\bigcup_{i \in M_1} D_i \cup X) \cap (V_2 \cup S)|.$$

Let $I$ be a subset of $M_{0}$
such that
$|I|=k$ and $\{0,l\} \subseteq I$.
Since
$x_{l} \in V_{1}$,
$y \in V_{2}$,
$H_{y}\not=H_{0}$
and
$N_C(H_{y}) \cap (D_{i} \cup \{ x_i \}) = \emptyset$ for all $i \in M_{1} \sm \{l\}$,
it follows from
Claim \ref{Xindep}
that $\{x_i : i \in I\} \cup \{y\}$ is an independent set of order $k+1$.
By the above inequality
and the inequality (\ref{xl}),
we obtain
\begin{eqnarray*}
\lefteqn{\dg{C}{x_l} +\dg{C}{y}}\\
& \le &
|C \cap (V_1 \cup V_2 \cup  S)|
+|C \cap S|
-|(\bigcup_{i \in M_{1} \sm \{l\}} D_{i} \cup X)\cap (V_1 \cup V_{2} \cup S)|\\
&\le& |C| +|C \cap S| -\sum_{i\in I \sm \{0,l\}}|D_{i}|- \dg{C}{x_0}.
\end{eqnarray*}
Therefore,
by the above inequality and the inequality (\ref{C}),
we obtain
$$\sum_{i \in I}\dg{C}{x_i}+\dg{C}{y}
\le |C|  +|C \cap S|+(k-2)(\alpha(G)-1).$$
Since 
$H_{0}\not=H_{y}$
and
$N_C(H_{y}) \cap (D_{i} \cup \{ x_i \}) = \emptyset$ for all $i \in M_{1}\sm \{l\}$,
it follows that
$(\bigcup_{i \in I \sm \{l\}} N_{H}(x_{i})) \cap V(H_{y})=\emptyset$.
Since
$x_{l} \in V_{1}$
and
$y \in V_{2}$,
we have
$N_{H}(x_{l}) \cap N_{H}(y) \subseteq H \cap S$.
Therefore,
we obtain 
$$\sum_{i \in I}\dg{H}{x_i}+\dg{H}{y} \le |H|+|H \cap S|-2.$$
Combining the above two inequalities, 
$\sum_{i \in I}\dg{G}{x_i}+\dg{G}{y}\le n+\kappa(G)+(k-2)(\alpha(G)-1)-2$, 
a contradiction.

\paragraph{}
We finally show that
$H_0 \subseteq V_1 \cup S$.
Suppose not.
Without loss of generality,
we may assume that
there exists a vertex $y_{0} \in H_{0} \cap V_{2}$.
Then
$$\dg{G}{y_{0}} \le |U \cap (V_{2} \cup S)|+ |H_{0}|-1.$$
Since $u_{l} \in V_{1}$,
we have $H_{0} \cap S \not=\emptyset$.
Note that by the above argument,
$X  \subseteq V_{1} \cup S$.
Therefore,
by Claim \ref{k+1},
$|X \cap V_{1}| = |X|-|X \cap S|
\ge \kappa(G)+1-(|S|-|H_{0}\cap S|)
\ge \kappa(G)+1-(\kappa(G)-1) = 2$.
Let $x_{s} \in X \cap V_{1}$ with $x_{s}\not=x_{l}$.
Let $I$ be a subset of $M_{1}$
such that
$|I|=k$ and $\{l,s\} \subseteq I$. 
Then
$\{x_i : i \in I\} \cup \{y_{0}\}$ is an independent set
of order $k+1$.
By Lemma \ref{insertible} (i),
we have
$N_{C}(x_{l})^- \cap (U \setminus \{u_{l}\}) =\emptyset$
and
$N_{C}(x_{s})^- \cap (U \setminus \{u_{s}\}) =\emptyset$.
Since  $x_{l}, x_{s} \in V_{1}$,
it follows that
$(N_{C}(x_{l}) \cup N_{C}(x_{s})) \cap (U \cap V_{2}) =\emptyset$.
Therefore,
we can improve the inequality (\ref{2vertices}) as follows:
\begin{equation*}
\dg{C}{x_l} +\dg{C}{x_s} \le |C|-\sum_{i \in I\sm \{l,s\}}|D_{i}|-|U \cap V_{2}|.
\end{equation*}
By the inequality (\ref{C}) and the inequality (\ref{H2}),
$$\sum_{i \in I \sm \{l,s\}}\dg{C}{x_i}
\le  \sum_{i \in I\sm \{l,s\}}|D_{i}|+(k-2)(\alpha(G)-1)
\text{\ \ and\ \ }
\sum_{i \in I }\dg{H}{x_i}
\le  |H|-|H_{0}|.$$
Hence, by the above four inequalities,
we deduce
$\dg{G}{y_{0}}+\sum_{i \in I}\dg{G}{x_i}
\le n+\kappa(G)+(k-2)(\alpha(G)-1)-1,$
a contradiction.
\end{proof}

\paragraph{}
By Claim \ref{U1},
\begin{center}
there exists an integer $r$
such that
$\iR{C}{x_{r}}{u_{r}'} \cap \bigcup_{i=2}^{p}V_{i}\not=\emptyset$,
\end{center}
say 
$$v_{2} \in \iR{C}{x_{r}}{u_{r}'} \cap \bigcup_{i=2}^{p}V_{i}.$$
Choose $r$ and $v_{2}$ so that 
$v_{2}\not= u_{r}'$
if possible.
Without loss of generality,
we may assume that
$v_{2} \in V_{2}.$
Note that
\begin{equation}
\dg{G}{v_{2}} \le |V_{2} \cup S|-1.\label{v2}
\end{equation}

\begin{Claim}\label{dgw}
$\dg{C}{w}\le \dg{C}{x_{0}} \le |X| \le \alpha(G)-1$
for each $w \in  W$.
\end{Claim}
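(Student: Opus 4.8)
The plan is to play the membership $w\in W$ against the degree-maximal choice of the pair $(C,x_0)$. Recall that $(C,x_0)$ was chosen so that $\dg{C}{x_0}$ is largest among all pairs consisting of a longest cycle and a vertex lying off it; consequently, for every longest cycle $C'$ of $G$ and every vertex $z\in V(G)\setminus V(C')$ we have $\dg{C'}{z}\le \dg{C}{x_0}$. The two rightmost inequalities of the claim are already at hand: $\dg{C}{x_0}\le|U|=|X|$ was noted right after the definition of $X$, and $|X|\le\alpha(G)-1$ is Claim \ref{Xindep}. Hence the real content is $\dg{C}{w}\le\dg{C}{x_0}$, and to get it I would exhibit a longest cycle $C'$ with $w\notin V(C')$ and $N_C(w)\subseteq V(C')$. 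Then $w$ is a vertex of $G-C'$, so $\dg{C}{w}\le\dg{C'}{w}\le\dg{C}{x_0}$ by the displayed maximality.

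To construct $C'$ I would use the two chords that membership $w\in W_i$ provides, namely $x_iw^+,\,x_{i+1}w^-\in E(G)$ (indices modulo $3$, with $w\in V(C_i)=V(\IR{C}{x_i}{u_{i+1}})$). Delete $w$ and reconnect the rest of $C$ as follows: start at $w^+$, follow $C$ forward to $u_{i+1}$, cross through $H_0$ along a $C$-path $P_0$ from $u_{i+1}$ to $u_i$ with $V(P_0)\cap V(H_0)\ne\emptyset$ (such a path exists because $u_i,u_{i+1}\in N_C(H_0)$ and $H_0$ is connected), follow $C$ backward from $u_i$ to $x_{i+1}$, take the chord $x_{i+1}w^-$, follow $C$ backward from $w^-$ to $x_i$, and finally take the chord $x_iw^+$ to close up. This yields a cycle $D$ whose vertex set is $\big(V(C)\setminus(\{w\}\cup D_i\cup D_{i+1})\big)\cup\big(V(P_0)\cap V(H_0)\big)$; the only vertices of $V(C)\setminus\{w\}$ it omits are the two insertible stretches $D_i$ and $D_{i+1}$.

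I would then reinstate $D_i$ and $D_{i+1}$ by Lemma \ref{D cup Q is hamilton}, taking base cycle $D$ and paths $Q_1:=D_i$, $Q_2:=D_{i+1}$. All vertices of $D_i\cup D_{i+1}$ are insertible with respect to $C$ (since $x_i,x_{i+1}$ are the first non-insertible vertices), and translating this into condition (I) for the base cycle $D$ should proceed exactly as in the proof of Lemma \ref{non-insertible}; condition (II), the disjointness of the sets $I(\cdot;D)$ arising from $D_i$ and from $D_{i+1}$, should follow from the crossing restrictions recorded in Lemma \ref{insertible} (i) and (ii). The Hamiltonian cycle of $G[V(D)\cup D_i\cup D_{i+1}]$ produced by the lemma is then the candidate $C'$: it avoids $w$ and contains all of $V(C)\setminus\{w\}$, hence contains $N_C(w)$.

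The step I expect to be the real obstacle is the length bookkeeping needed to make $C'$ a \emph{longest} cycle, for only then does the maximality of $\dg{C}{x_0}$ apply. Since $V(C')=\big(V(C)\setminus\{w\}\big)\cup\big(V(P_0)\cap V(H_0)\big)$, we have $|C'|=|C|-1+|V(P_0)\cap V(H_0)|$, so $|C'|=|C|$ forces $|V(P_0)\cap V(H_0)|=1$; that is, $P_0$ must meet $H_0$ in a single vertex, equivalently the detour must pass through one common $H_0$-neighbour of $u_i$ and $u_{i+1}$. Guaranteeing this (or, failing it, compensating by keeping exactly one fewer vertex of $D_i\cup D_{i+1}$ on $C'$ while never dropping a neighbour of $w$, so that $|C'|=|C|$ and $N_C(w)\subseteq V(C')$ are preserved simultaneously) is the delicate point; note that if $D_i,D_{i+1}$ could always be fully reinserted while $P_0$ met $H_0$ in two or more vertices, the resulting cycle would be strictly longer than $C$, contradicting the maximality of $C$, so the length bookkeeping and the insertibility verification are genuinely coupled. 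Everything else is the standard reroute-and-insert computation already carried out in Lemmas \ref{non-insertible} and \ref{insertible}.
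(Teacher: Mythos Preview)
Your approach is exactly the paper's: build the rerouted cycle $D$ through $H_0$ using the chords $x_iw^+$ and $x_{i+1}w^-$, reinsert $D_i$ and $D_{i+1}$ via Lemma~\ref{D cup Q is hamilton} (conditions (I) and (II) coming from insertibility and Lemma~\ref{insertible} (i)--(ii), just as you say), and then compare degrees off the new longest cycle. The obstacle you anticipate is not one: the construction automatically gives $|C'|\ge|C|$ (you lose $w$ but gain at least one vertex of $H_0$), while the maximality of $C$ gives $|C'|\le|C|$, so $|C'|=|C|$ is forced with no need to control $|V(P_0)\cap V(H_0)|$ in advance---the paper simply writes ``by the maximality of $|C|$, $|C'|=|C|$'' and moves on.
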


\begin{proof}
Let $w \in  W$.
Without loss of generality,
we may assume that
$w \in W_1$. 
Then by applying Lemma \ref{D cup Q is hamilton} 
as $Q_{1} = D_{1}$, $Q_{2} = D_{2}$ and 
$$D = x_1\IR{C}{w^{+}}{u_{2}}\IR{P}{u_2}{u_1}\IL{C}{u_1}{x_{2}}\IL{C}{w^{-}}{x_1},$$ 
where $\IR{P}{u_2}{u_1}$ is a $C$-path 
passing through some vertex of $H_{0}$, 
we can obtain a cycle $C'$ such that $V(C) \setminus \{w\} \subseteq V(C')$ and $V(C') \cap V(H_{0}) \neq \emptyset$ 
(note that (I) and (II) of Lemma \ref{D cup Q is hamilton} hold,
by Lemma \ref{insertible} (i) and (ii) and the definition of insertible and $D_{i}$). 
Note that by the maximality of $|C|$, $|C'| = |C|$. 
Note also that $\dg{C'}{w} \ge \dg{C}{w}$. 
By the choice of $C$ and $x_{0}$, 
we have 
$\dg{C'}{w} \le \dg{C}{x_{0}}$, 
and hence 
by Claim \ref{Xindep} and the fact that $\dg{C}{x_{0}} \le |X|$, 
we obtain 
$\dg{C}{w}\le \dg{C}{x_{0}} \le |X| \le \alpha(G)-1$.
\end{proof}

By Lemma \ref{insertible} 
and Claim \ref{U1},
we have
\begin{equation}\label{xwH}
\sum_{i \in M_{0}}\dg{H}{x_{i}}+\sum_{w \in W}\dg{H}{w} \le |H|-|\{x_{0}\}| =|H \cap (V_1 \cup S)|-1.
\end{equation}

Moreover,
by Lemma \ref{insertible} and Claim \ref{Xindep},
the following claim holds. 

\begin{Claim}\label{XW}
$X \cup W \cup \{x_{0}\}$ is an independent set.
\end{Claim}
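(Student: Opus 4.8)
The plan is to build on Claim \ref{Xindep}, which already gives that $X\cup\{x_{0}\}$ is independent; it therefore suffices to show that every $w\in W$ is non-adjacent to each of the remaining vertices of $X\cup W\cup\{x_{0}\}$. Fix $w\in W$, and by the cyclic symmetry of the definition of $W$ assume $w\in W_{1}$, so that by the definition of $W_{1}$ the two chords $x_{1}w^{+}$ and $x_{2}w^{-}$ lie in $E(G)$; here $w^{+},w^{-}\in V(C_{1})$, and using Claim \ref{Xindep} to rule out $w^{-}=x_{1}$ (which would force $x_{1}x_{2}\in E(G)$) we have the cyclic order $x_{1}<w^{-}<w<w^{+}\le u_{2}$ along $C_{1}$. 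First I would dispose of $w\not\sim x_{0}$: since $W\cap U=\emptyset$ we have $w\in V(C)\sm U$, while every neighbour of $x_{0}$ on $C$ lies in $N_{C}(H_{0})=U$, so $w\not\sim x_{0}$.

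Next I would rule out $w\sim x_{j}$ for every $j$, by feeding the two chords into Lemma \ref{insertible}(i),(ii) and casing on the position of $u_{j}$ along $C$ (recall that no $u_{j}$ lies in $\ir{C}{u_{1}}{x_{1}}$). Applying Lemma \ref{insertible}(ii) to the chord $x_{2}w^{-}$ with the pair $(u_{1},u_{2})$, in contrapositive form, forbids a $C$-path $x_{1}w$ and hence gives $w\not\sim x_{1}$. For $u_{j}\in\iR{C}{w^{+}}{u_{1}}$ (which includes $j=2$) I would apply Lemma \ref{insertible}(ii) to the chord $x_{1}w^{+}$ with the pair $(u_{1},u_{j})$, so that $w^{+}\in\iR{C}{x_{1}}{u_{j}}$ and the conclusion forbids a $C$-path from $\iR{C}{u_{j}}{x_{j}}$ to $w$, giving $x_{j}\not\sim w$. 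Finally, for a $u_{j}$ strictly inside $\ir{C}{x_{1}}{w^{+}}$, Lemma \ref{insertible}(i) applied to the chord $x_{1}w^{+}$ shows $x_{j}$ cannot lie after $w$ (else $w^{+}\in\iR{C}{u_{j}}{x_{j}}$, contradicting (i)), and the remaining possibility $x_{j}<w$ is excluded by Lemma \ref{insertible}(ii) applied to $x_{2}w^{-}$ with the pair $(u_{j},u_{2})$.

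To handle $w\not\sim w'$ for distinct $w,w'\in W$, I would first observe that no two vertices of $W$ are consecutive on $C$: two consecutive members of a single $W_{i}$ would make $x_{i}$ or $x_{i+1}$ insertible into the edge joining them (via the two equal-labelled chords), contradicting the choice of $x_{i}$, while members of different arcs are separated by the gaps $\ir{C}{u_{i+1}}{x_{i+1}}$; thus $ww'$ is never an edge of $C$. A chord $ww'$ is then excluded by Lemma \ref{insertible}(iii) when $w,w'$ lie in the same $C_{i}$ (taking $w_{1}=w^{+}$ for the earlier vertex and $w_{2}=w'^{-}$), and by Lemma \ref{insertible}(iv) when they lie in distinct arcs (taking $w_{1}=w^{+}$ and $w_{2}=w'^{+}$ after a cyclic relabelling). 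Since there are only three arcs, these two cases exhaust all pairs, and combining everything with Claim \ref{Xindep} yields that $X\cup W\cup\{x_{0}\}$ is independent.

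The step I expect to be the main obstacle is the middle one, ruling out $w\sim x_{j}$: it splits into several subcases according to where $u_{j}$ and $x_{j}$ fall relative to $w$, and in each subcase one must choose the correct pair $(u_{a},u_{b})$ and the correct chord so that the precise hypotheses of Lemma \ref{insertible}(i) or (ii) are met. Particular care is needed at the boundary configurations $w^{+}=u_{2}$ and when $x_{j}$ lies in the same inter-$U$ arc as $w$; in the degenerate event that $w$ itself equals some $x_{j}$ (so $w\in X\cap W$) the non-adjacency of $w$ to $X\cup\{x_{0}\}$ reduces directly to Claim \ref{Xindep}, so the independence is not affected.
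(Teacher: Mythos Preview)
Your proof is correct and is precisely a detailed unpacking of the paper's own one-line justification, which reads in full ``by Lemma~\ref{insertible} and Claim~\ref{Xindep}''; the case split you carry out with parts (i)--(iv) of Lemma~\ref{insertible} is exactly what that citation encodes. The only cosmetic slip is that the range in your second subcase should be $\IR{C}{w^{+}}{u_{1}}$ rather than $\iR{C}{w^{+}}{u_{1}}$, so that the boundary $u_{j}=w^{+}$ (in particular $w^{+}=u_{2}$) is absorbed---the same application of Lemma~\ref{insertible}(\ref{crossing2}) disposes of it without change.
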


We now check the degree sum of the vertices $x_{1},x_{2}$ and $x_{3}$ in $C$.
In this paragraph,
the indices are taken modulo $3$.
By Lemma \ref{insertible} (ii),
$(N_{C_i}(x_i)^- \cup N_{C_i}(x_{i+1})^+) \cap N_{C_i}(x_{i+2})=\emptyset$
for $i\in \{1,2,3\}$.
Clearly,
$N_{C_i}(x_{i})^- \cap N_{C_i}(x_{i+1})^+=W_i$
and
${N_{C_i}(x_i)}^- \cup {N_{C_i}(x_{i+1})}^+
\cup {N_{C_i}(x_{i+2})} \subseteq C_i \cup
\{u_{i+1}^+\}$.
By Lemma \ref{insertible} (i),
$(N_{C_{i}}(x_i)^{-} \cup N_{C_{i}}(x_{i+2})) \cap D_j=\emptyset$
for $i \in \{1,2,3\}$ and $j\in M_{1}$.
For $i \in \{1,2,3\}$,
let
$$L_{i}=\left\{ x_{j} \in X \sm \{x_{i+1}\} : N_{C_{i}}(x_{i+1})^{+} \cap D_{j}\not=\emptyset \right\}$$
and
let
$L=\bigcup_{i \in \{1,2,3\}}L_{i}$
(see Figure \ref{DefL}).

\begin{figure}[h]
\begin{center}
\includegraphics{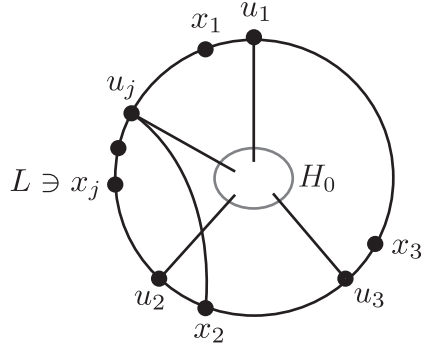}
\caption{The definition of $L$.}
\label{DefL}
\end{center}
\end{figure}
Note that $L \cap \{x_1,x_2,x_3\}=\emptyset$
and $W \cap L=\emptyset$
by Lemma \ref{insertible} (i).
Therefore
 the following inequality holds:
$$\dg{C_i}{x_1}+ \dg{C_i}{x_2}+\dg{C_i}{x_3}
\leq |C_i| + |W_i|+ 1-\sum_{j \in M_{1}}|C_i \cap D_{j}|+ |L_{i}|$$
for $i \in\{1,2,3\}$.
By Lemma \ref{insertible} (i),
we have
$N_{C}(x_{i}) \cap D_{j}=\emptyset$
for $i,j \in M_{1}$ with $i\not=j$,
and hence
$$\dg{D_i}{x_1}+ \dg{D_i}{x_2}+\dg{D_i}{x_3}
\leq |D_i|$$
for $i \in\{1,2,3\}$.
Let $I$ be a subset of $M_{0}$
such that 
$I \cap \{1,2,3\}=\emptyset$.
Let $L_{I}=L \cap \{x_{i} : i \in I\}$.
Note that $|L \cap \{x_{i}\}| - |D_i| \le 0$ for each  $i \in M_{1} \sm \{1,2,3\}$.
Thus,
we deduce
\begin{eqnarray}
\dg{C}{x_1}+\dg{C}{x_2}+\dg{C}{x_3}
&\le&
\sum_{i=1}^3(|C_i|+|W_i|+|L_{i}|+1-\sum_{j \in M_{1}}|C_{i} \cap D_{j}|+|D_{i}|)\nonumber\\
&=&
|C|+|W|+|L|-\sum_{i \in M_{1}  \sm \{1,2,3\}}|D_{i}|+3\label{WL}\nonumber\\
&\le&
|C|+|W|+|L_{I}|-\sum_{i \in I\sm\{0\}}|D_{i}|+3\label{W3}\\
&\le&
|C|+|W|+3.\label{W}
\end{eqnarray}

\begin{Claim}\label{WLk-2}
$|W|+|L| \ge \kappa(G)-2 \ge 1$.
\end{Claim}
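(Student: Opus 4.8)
The plan is to feed the degree sum hypothesis a single, carefully chosen independent set of size $k+1$ built around $x_{1},x_{2},x_{3}$, and to read the desired bound straight off inequality (\ref{W3}). Concretely, I would fix a $(k-3)$-element set $J \subseteq M_{1} \sm \{1,2,3\}$, which exists since $m = |M_{1}| \ge \kappa(G) \ge k$, and put $I = \{0\} \cup J$, so that $I \cap \{1,2,3\} = \emptyset$, $0 \in I$ and $|I| = k-2$. Then $\{x_{1},x_{2},x_{3}\} \cup \{x_{i} : i \in I\}$ consists of $k+1$ distinct vertices, and it is independent by Claim \ref{Xindep}; hence $\sigma_{k+1}(G)$ bounds its degree sum from below.

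The heart of the computation is to bound this degree sum from above, separating $C$-degrees from $H$-degrees. For the $C$-part I would apply (\ref{W3}) to $\dg{C}{x_{1}}+\dg{C}{x_{2}}+\dg{C}{x_{3}}$, apply (\ref{C}) to each $\dg{C}{x_{i}}$ with $i \in J$, and use $\dg{C}{x_{0}} \le |X| \le \alpha(G)-1$. The key cancellation is that the term $-\sum_{i \in I \sm \{0\}}|D_{i}|$ produced by (\ref{W3}) is exactly matched by the $\sum_{i \in J}|D_{i}|$ produced by (\ref{C}), recalling that $I \sm \{0\} = J$; this leaves $C$-degree sum at most $|C| + |W| + |L_{I}| + (k-2)(\alpha(G)-1) + 3$. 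For the $H$-part, since the relevant index set $\{0,1,2,3\} \cup J \subseteq M_{0}$ and contains $0$, inequality (\ref{H}) gives $H$-degree sum at most $|H|-1$. Adding the two and using $|C|+|H| = n$ yields total degree sum at most $n + |W| + |L_{I}| + (k-2)(\alpha(G)-1) + 2$.

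Comparing this with $\sigma_{k+1}(G) \ge n + \kappa(G) + (k-2)(\alpha(G)-1)$ immediately gives $\kappa(G) \le |W| + |L_{I}| + 2$; since $L_{I} \subseteq L$ this yields $|W| + |L| \ge |W| + |L_{I}| \ge \kappa(G)-2$, and $\kappa(G)-2 \ge 1$ follows from $\kappa(G) \ge k \ge 4$. I expect the only real obstacle to be bookkeeping rather than idea: one must check that the $\sum|D_{i}|$ terms cancel exactly and that the additive constant (the $+3$ from (\ref{W3}) set against the $-1$ from (\ref{H})) is tracked correctly, and one must verify that the chosen $(k+1)$-set is genuinely independent and of the right cardinality. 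All the structural input---independence of $X \cup \{x_{0}\}$, the per-vertex $C$-degree estimates, and the confinement $D \cup X \cup W \cup H \subseteq V_{1} \cup S$---is already supplied by the preceding claims, so no further combinatorial work should be needed, and in particular the vertex $v_{2}$ introduced just above plays no role in this claim.
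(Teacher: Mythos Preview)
Your proposal is correct and essentially identical to the paper's own argument: the paper also chooses $I\subseteq M_{0}$ with $|I|=k-2$ and $I\cap\{1,2,3\}=\emptyset$, applies (\ref{W3}) together with (\ref{C}), the bound $\dg{C}{x_{0}}\le \alpha(G)-1$, and (\ref{H}) to the independent $(k{+}1)$-set $\{x_{1},x_{2},x_{3}\}\cup\{x_{i}:i\in I\}$, and compares with $\sigma_{k+1}$. The only cosmetic difference is that the paper phrases it as a contradiction from $|W|+|L_{I}|\le\kappa(G)-3$, whereas you derive $|W|+|L_{I}|\ge\kappa(G)-2$ directly.
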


\begin{proof}
Let $I$ be a subset of $M_{0}$
such that $|I|=k-2$ and 
$I \cap \{1,2,3\}=\emptyset$.
Suppose that
$|W|+|L_{I}|\le \kappa(G)-3$.
By Claim \ref{XW}, 
$\{x_{i} : i \in I\}\cup \{x_{1},x_{2},x_{3}\}$ is an independent set of order $k+1$.
By the inequality (\ref{WL}),
we obtain
\begin{eqnarray*}
\dg{C}{x_1} +\dg{C}{x_2}+\dg{C}{x_3}
&\leq&
|C|+\kappa(G)-\sum_{i \in I\sm\{0\}}|D_{i}|.
\end{eqnarray*}
Therefore,
this inequality,
the inequalities (\ref{C}) and (\ref{H})
and Claim \ref{dgw}
yield that
\begin{eqnarray*}
\sum_{ i= 1}^{3}\dg{G}{x_i}+\sum_{ i\in I }\dg{G}{x_i}
&\le&
n+\kappa(G)+(k-2)(\alpha(G)-1)-1,
\end{eqnarray*}
a contradiction.
Therefore
$|W|+|L| \ge |W|+|L_{I}| \ge \kappa(G)-2$.
\end{proof}

\begin{Claim}\label{x0a-1}
$\dg{C}{x_{0}}=|U|=|X|=  \alpha(G)-1$.
In particular, $N_{C}(x_{0})=U$.
\end{Claim}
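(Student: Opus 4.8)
The plan is to reduce the claim to a single inequality and then exclude the bad case by a degree-sum argument. Since $x_{0}\in V(H_{0})$ and $N_{C}(H_{0})=U$, every neighbour of $x_{0}$ on $C$ belongs to $U$, so $\dg{C}{x_0}\le |U|=|X|$, and together with Claim~\ref{Xindep} this yields the chain $\dg{C}{x_0}\le |U|=|X|\le \alpha(G)-1$. Hence it suffices to prove $\dg{C}{x_0}\ge \alpha(G)-1$: the four quantities then coincide, and $N_{C}(x_0)\subseteq U$ with $|N_{C}(x_0)|=|U|$ forces $N_{C}(x_0)=U$. Accordingly I would assume for contradiction that $\dg{C}{x_0}\le \alpha(G)-2$.

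From here the plan is to build an independent set of order $k+1$ whose degree sum is at most $n+\kappa(G)+(k-2)(\alpha(G)-1)-1$, contradicting the hypothesis on $\sigma_{k+1}$. I would assemble it from the triple $\{x_{1},x_{2},x_{3}\}$, whose joint degree on $C$ is bounded by~(\ref{W3}); a further choice $\{x_{i}:i\in I\}$ of non-insertible vertices, each bounded on $C$ by~(\ref{C}); the vertex $x_{0}$, which now carries the saving $\dg{C}{x_0}\le \alpha(G)-2$; and the low-degree vertex $v_{2}\in V_{2}$, whose total degree is bounded by~(\ref{v2}) or, where that is too lossy, by the sharper $V_{2}$-side estimate already used in the proof of Claim~\ref{U1}. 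The $H$-degrees of the chosen $x_{i}$ are controlled jointly by~(\ref{H}). Independence follows from Claim~\ref{XW} together with $X\subseteq V_{1}\cup S$ (Claim~\ref{U1}) and $v_{2}\in V_{2}$, so that $v_{2}$ is non-adjacent to the $x_{i}$ drawn from $V_{1}$. Feeding $\dg{C}{x_0}\le \alpha(G)-2$ into the resulting estimate, the desired contradiction reduces to an upper bound of the shape $|W|+|L_{I}|\le \kappa(G)-2$ for a well-chosen $I$.

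The hard part will be exactly this upper bound on $|W|+|L_{I}|$, which points in the \emph{opposite} direction to Claim~\ref{WLk-2} and is where the separator $S$ and the vertex $v_{2}$ enter essentially. The mechanism I would exploit is that $N_{G}(v_{2})\subseteq V_{2}\cup S$ while the neighbourhoods of the $x_{i}\in V_{1}$ lie in $V_{1}\cup S$: the two families of neighbourhoods overlap only inside $S$ and miss $V_{3}\cup\cdots\cup V_{p}$ entirely, so that summing them counts at most $|V_{1}|+|V_{2}|+|S|\le n$ vertices, with the common neighbours forced into $S$ controlled by Lemma~\ref{insertible}; this is what turns the crude bound~(\ref{v2}) into a genuine saving. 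The remaining delicate step is to choose $I$ so as to minimise $|L_{I}|$ --- steering $I\setminus\{0\}$ away from the indices $j$ with $x_{j}\in L$ --- while keeping every selected $x_{i}$ inside $V_{1}$, so that both independence and the confinement of neighbourhoods to $V_{1}\cup S$ survive; arranging the constants to land on exactly $-1$ is where the bookkeeping is heaviest.
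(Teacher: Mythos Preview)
Your reduction to the inequality $\dg{C}{x_0}\ge \alpha(G)-1$ is correct, and the first paragraph is fine. The second and third paragraphs, however, miss the main mechanism and the arithmetic does not close.

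The key point you overlook is Claim~\ref{dgw}: it says $\dg{C}{w}\le \dg{C}{x_0}$ for \emph{every} $w\in W$. Under the assumption $\dg{C}{x_0}\le \alpha(G)-2$, this propagates the saving of~$1$ to all of $W$, not just to $x_0$. That is what kills the $+|W|$ term in~(\ref{W}). Your plan uses the saving only once, at $x_0$, and then tries to absorb $|W|+|L_I|$ via the separator and $v_2$; but a single $-1$ cannot compete with $|W|$ when $|W|$ is large, and your claimed reduction to ``$|W|+|L_I|\le \kappa(G)-2$'' is off: if you include $v_2$ with the bound~(\ref{v2}) you pick up $|V_2|+\kappa(G)-1$, not $\kappa(G)-1$, and the $|V_2|$ has nowhere to go because the $|C|$ in~(\ref{W3}) already counts $C\cap V_2$. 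The ``sharper $V_2$-side estimate'' from Claim~\ref{U1} does not apply to the generic vertex $v_2\in \iR{C}{x_r}{u_r'}$, which carries no insertibility structure. Finally, forcing all selected $x_i$ (including $x_2,x_3$) into $V_1$ to secure independence from $v_2$ is not guaranteed: Claim~\ref{U1} only places $X$ in $V_1\cup S$.

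The paper's argument instead splits on $|W|$. If $|W|\ge k-3$, take $W^{*}\subseteq W\cup\{x_0\}$ of size $k-2$; by Claim~\ref{dgw} each vertex of $W^{*}$ has $\dg{C}{\cdot}\le \alpha(G)-2$, so $\sum_{W^{*}}\dg{C}{\cdot}\le (k-2)(\alpha(G)-2)$, and together with~(\ref{W}) and~(\ref{xwH}) the degree sum over $W^{*}\cup\{x_1,x_2,x_3\}$ falls below $\sigma_{k+1}$, \emph{provided} $|W|\le \kappa(G)+k-5$; the vertex $v_2$ is used only for this auxiliary upper bound (via a subclaim mirroring the proof of Claim~\ref{U1}). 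If $|W|\le k-4$, one uses all of $W\cup\{x_0\}$ (giving $|W|+1$ savings) and fills the remaining $k-3-|W|$ slots with vertices of $L$, available by Claim~\ref{WLk-2}; now~(\ref{WL}) with $L_I=L^{*}$ makes $|W|+|L^{*}|=k-3\le \kappa(G)$ by construction, and no separator argument is needed at all.
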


\begin{proof}
Suppose that $\dg{C}{x_{0}} \le \alpha(G)-2$. 
In this proof,
we assume $x_l =x_1$
(recall that $l$ is an integer such that $C[u_{l}, u_{l}')  \subseteq V_{1}$, 
see the paragraph below the proof of Claim \ref{k+1}).
We divide the proof into two cases. 

\bigskip

\noindent\textit{Case 1.} $|W| \ge k-3$.

\begin{subclaim}\label{W2k-5}
$|W| \le \kappa(G)+k-5$.
\end{subclaim}

\begin{proof}
Suppose that $|W| \ge \kappa(G)+k-4$.
By Claim \ref{U1},
we obtain
\begin{eqnarray*}
|(W \cup \{x_{0},x_{1},x_{2},x_{3}\}) \cap V_{1}|
&=&|W \cup \{x_{0},x_{1},x_{2},x_{3}\}|-|(W\cup \{x_{0},x_{1},x_{2},x_{3}\}) \cap S|\\
& \ge& (\kappa(G)+k-4+4) -\kappa(G) =k.
\end{eqnarray*}
Let $W'$ be a  subset of $(W\cup \{x_{0},x_{1},x_{2},x_{3}\}) \cap V_{1}$
such that $|W'|=k$ and  $x_{1} \in W'$.
Since $W' \subseteq V_{1}$
and $v_{2} \in V_{2}$,
it follows from Claim \ref{XW}
that
$W' \cup \{v_{2}\}$
is an independent set of order $k+1$.
By the inequality (\ref{xl}) and Claims \ref{U1} and \ref{dgw},
we obtain
\begin{eqnarray*}
\dg{C}{x_1}
&\le& 
|C \cap (V_1 \cup  S)|-\sum_{i \in M_{1} \sm \{1\}}|(D_{i} \cap (V_1 \cup  S) | - |X \cap (V_1 \cup  S)|\\
&\le& |C \cap (V_1 \cup  S)|-\sum_{i \in \{2,3\}}|D_{i}|-|X|\\
&\le& |C \cap (V_1 \cup  S)|-\sum_{i \in \{2,3\}}|D_{i}|-\dg{C}{w_0},
\end{eqnarray*}
where $w_{0} \in W'\sm\{x_{1},x_{2},x_{3}\}$
(note that $|W'| = k \ge 4$).
By the inequality (\ref{C}) and Claim \ref{dgw},
$$
\sum_{x \in W' \cap \{x_{2},x_{3}\}}\dg{C}{x}
+
\sum_{w \in W'\sm \{w_{0},x_{1},x_{2},x_{3}\}}\dg{C}{w}
\le \sum_{i \in \{2,3\}}|D_{i}|+(k-2)(\alpha(G)-1).$$
By the above two inequalities, 
we obtain
\begin{equation*}
\sum_{w \in W'}\dg{C}{w}
\le |C \cap (V_1 \cup  S)|
+(k-2)(\alpha(G)-1).
\end{equation*}
Therefore, 
since 
$\sum_{w \in W'}\dg{H}{w} \le |H \cap (V_1 \cup S)|-1$ 
by the inequality (\ref{xwH}), 
it follows that 
\begin{equation*}\label{notr}
\sum_{w \in W'}\dg{G}{w}
\le |V_1 \cup  S|+(k-2)(\alpha(G)-1)-1.
\end{equation*}
Summing this inequality and the inequality (\ref{v2})
yields that
$\sum_{w \in W'}\dg{G}{w}
+\dg{G}{v_{2}}
\le n+\kappa(G)+(k-2)(\alpha(G)-1)-2,$
a contradiction.
\end{proof}

By the assumption of Case 1,
we can take a subset $W^{*}$ of $W \cup \{x_{0}\}$
such that $|W^{*}|=k-2$.
By Claim \ref{XW},
$W^{*} \cup \{x_{1},x_{2},x_{3}\}$ is independent.
Moreover,
by Claim \ref{dgw} and 
the assumption that $\dg{C}{x_{0}}\leq \alpha(G) -2$, 
we have
$$\sum_{w \in W^{*}}\dg{C}{w} \le(k-2)(\alpha(G)-2).$$
By Subclaim \ref{W2k-5},
summing
this inequality
and
the inequality (\ref{W})
yields
that
\begin{eqnarray*}
&&\sum_{i=1}^{3}\dg{C}{x_i}
+\sum_{w \in W^{*}}\dg{C}{w}\\
&\le&
|C|+|W|+3+(k-2)(\alpha(G)-2)\\
&\le&
|C|+(\kappa(G)+k-5)+3-(k-2)+(k-2)(\alpha(G)-1)\\
&=&
|C|+\kappa(G)+(k-2)(\alpha(G)-1).
\end{eqnarray*}
Therefore, 
since 
$\sum_{i=1}^{3}\dg{H}{x_i}+\sum_{w \in W^{*}}\dg{H}{w} \le |H|-1$ 
by the inequality (\ref{xwH}),
we obtain
$\sum_{i=1}^{3}\dg{G}{x_i}+\sum_{w \in W^{*}}\dg{G}{w}
\le n+\kappa(G)+(k-2)(\alpha(G)-1)-1$,
a contradiction.
\bigskip

\noindent\textit{Case 2.} $|W| \le k-4$.

By Claim \ref{WLk-2},
we can take a subset
$L^{*}$ of $L$
such that
$|L^{*}|=k-3-|W|$.
Let $I=\{ i : x_{i} \in L^{*}\} $.
By Claim \ref{XW},
$W \cup L^{*}\cup \{x_{0}, x_{1},x_{2},x_{3}\}$ is 
an independent set of order $k+1$.
By the inequality (\ref{WL}),
we have
\begin{eqnarray*}
\dg{C}{x_1}+\dg{C}{x_2}+\dg{C}{x_3}
&\le&
|C|+|W|+|L^{*}|-\sum_{i \in I}|D_{i}|+3\\
&=&
|C|+k-3-\sum_{i \in I}|D_{i}|+3\\
&\le&
|C|+\kappa(G)-\sum_{i \in I}|D_{i}|.
\end{eqnarray*}

On the other hand,
it follows from Claim \ref{dgw}, the assumption $\dg{C}{x_{0}}\leq \alpha -2$ and the inequality (\ref{C}) that
\begin{eqnarray*}
\sum_{w \in W \cup \{x_{0}\}}\dg{C}{w}+
\sum_{x \in L^{*}}\dg{C}{x}
&\le&
(|W|+1)(\alpha(G)-2)
+\sum_{i \in I }|D_{i}|+|L^{*}|(\alpha(G)-1)\\
&=&
(k-2)(\alpha(G)-1)
-|W|-1
+\sum_{i \in I }|D_{i}|\\
&\le&
(k-2)(\alpha(G)-1)
+\sum_{i \in I }|D_{i}|-1.
\end{eqnarray*}
Thus,
we deduce
$$\sum_{i=1}^{3}\dg{C}{x_i}+
\sum_{w \in W \cup \{x_{0}\}}\dg{C}{w}+
\sum_{x \in L^{*}}\dg{C}{x}
\le
|C|+\kappa(G)+(k-2)(\alpha(G)-1)-1.$$
By the inequality (\ref{xwH}),
we obtain
$$\sum_{i=1}^{3}\dg{H}{x_i}
+\sum_{w \in W \cup \{x_{0}\}}\dg{H}{w}+
\sum_{x \in L^{*}}\dg{H}{x}
\le |H|-1.
$$
Summing the above two inequalities
yields that
$\sum_{i=1}^{3}\dg{G}{x_i}+
\sum_{w \in W \cup \{x_{0}\}}\dg{G}{w}+
\sum_{x \in L^{*}}\dg{G}{x}
\le n+\kappa(G)+(k-2)(\alpha(G)-1)-2,$
a contradiction.
\bigskip

By Cases 1 and 2,
we have
$\dg{C}{x_{0}} \ge \alpha(G)-1$.
Since
$|U|=|X|$,
it follows 
from Claim \ref{dgw}
that $\dg{C}{x_{0}} =|U|=|X| =  \alpha(G)-1$. 
In particular, $N_{C}(x_{0})=U$ 
because $N_{C}(x_{0}) \subseteq N_{C}(H_{0}) = U$. 
This completes the proof of Claim \ref{x0a-1}.
\end{proof}

\begin{Claim}\label{WX}
$W \subseteq X$. 
\end{Claim}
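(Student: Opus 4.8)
The plan is to obtain $W \subseteq X$ by a pure cardinality argument from the independence structure already in hand, rather than by any new geometric analysis of the cycle. The two decisive inputs are Claim \ref{x0a-1}, which pins down the exact value $\dg{C}{x_0} = |U| = |X| = \alpha(G)-1$, and Claim \ref{XW}, which asserts that $X \cup W \cup \{x_0\}$ is an independent set. Everything else is bookkeeping about which vertices lie on $C$ versus in $H$.

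First I would record that $X \cup \{x_0\}$ is a \emph{maximum} independent set. Each $x_i$ lies in $\ir{C}{u_i}{u_i'} \subseteq V(C)$, so $X \subseteq V(C)$, whereas $x_0 \in V(H_0) \subseteq V(H) = V(G \ms C)$; hence $x_0 \notin X$. Combining this with $|X| = \alpha(G)-1$ from Claim \ref{x0a-1} shows $X \cup \{x_0\}$ is independent of order exactly $\alpha(G)$, so it is a maximum independent set of $G$.

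Then I would argue by contradiction. Suppose some $w \in W$ satisfies $w \notin X$. Since $W_i \subseteq V(C_i) \subseteq V(C)$ we have $W \subseteq V(C)$, while $x_0 \in V(H)$, so $w \neq x_0$; thus $w \notin X \cup \{x_0\}$. By Claim \ref{XW} the set $X \cup W \cup \{x_0\}$ is independent, so in particular its subset $X \cup \{x_0\} \cup \{w\}$ is independent, and it has order $\alpha(G)+1$. This contradicts the definition of the independence number $\alpha(G)$. Therefore no such $w$ exists, and $W \subseteq X$.

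I do not expect a real obstacle in this step: the entire substance of the claim has already been absorbed into the sharp equality $|X| = \alpha(G)-1$ supplied by Claim \ref{x0a-1} and the independence assertion of Claim \ref{XW}. The only points requiring (trivial) care are the distinctness facts $x_0 \notin X$ and $w \neq x_0$, both immediate from $X, W \subseteq V(C)$ together with $x_0 \in V(H)$.
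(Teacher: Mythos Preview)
Your proof is correct and follows essentially the same approach as the paper's: both derive a contradiction from Claim~\ref{XW} and Claim~\ref{x0a-1} by observing that any $w\in W\setminus X$ would enlarge the independent set $X\cup\{x_0\}$ beyond size $\alpha(G)$. The paper's version is a one-line statement of this cardinality argument, while you have spelled out the distinctness checks $x_0\notin X$ and $w\neq x_0$ explicitly.
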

\begin{proof}
If $W \setminus X \neq \emptyset$, 
then by Claim \ref{XW}, 
we have $\dg{C}{x_{0}} \le |X| \le \alpha(G)-2$, 
which contradicts Claim \ref{x0a-1}.
\end{proof}

\begin{Claim} \label{c}
If there exist distinct two integers $s$ and $t$ in $M_1$
such that
$u_s \in N_C(x_t)$,
then 
$N_C(x_s) \cap \IR{C}{u_{t}}{u_{s}} \subseteq U $.
\end{Claim}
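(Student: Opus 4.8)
The plan is to argue by contradiction. Suppose there is a vertex $z\in N_{C}(x_{s})\cap\IR{C}{u_{t}}{u_{s}}$ with $z\notin U$. Since $z\notin U$, there is a unique $j\in M_{1}$ with $u_{j}\in\IR{C}{u_{t}}{u_{s}}$ and $z\in\ir{C}{u_{j}}{u_{j}'}$ (possibly $j=t$); moreover $j\neq s$, because $z$ lies before $u_{s}$ along $C$ and $z\neq u_{s}$, and $z\neq x_{j}$, because $x_{j}x_{s}\notin E(G)$ by Lemma~\ref{insertible}~(i) while $x_{s}z\in E(G)$. Hence $z$ lies in exactly one of the two ranges $\iR{C}{u_{j}}{x_{j}}$ and $\ir{C}{x_{j}}{u_{j}'}$, and I would split the argument accordingly.

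First I would dispose of the case $z\in\iR{C}{u_{j}}{x_{j}}$. Here the single edge $x_{s}z$ is a $C$-path joining $x_{s}\in\iR{C}{u_{s}}{x_{s}}$ and $z\in\iR{C}{u_{j}}{x_{j}}$ with $j\neq s$, which is precisely the configuration forbidden by Lemma~\ref{insertible}~(i). This yields an immediate contradiction, so from now on $z\in\ir{C}{x_{j}}{u_{j}'}$, i.e.\ $z$ lies strictly beyond the first non-insertible vertex $x_{j}$ of its gap.

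For the remaining (main) case I would bring in two further ingredients: the hypothesis edge $x_{t}u_{s}$, and the identity $N_{C}(x_{0})=U$ from Claim~\ref{x0a-1}, which supplies the $C$-path $u_{t}x_{0}u_{s}$ through $H_{0}$. The idea is to build from $C$ a base cycle $D$ that passes through $x_{0}$ (so that it meets $H_{0}$), that uses the two edges $x_{t}u_{s}$ and $x_{s}z$ to reverse two suitable arcs of $C$, and that omits only the insertible initial segments $D_{j}=\ir{C}{u_{j}}{x_{j}}$ and $D_{s}=\ir{C}{u_{s}}{x_{s}}$. These two segments are then re-inserted into $D$ by taking them as the paths $Q_{1},Q_{2}$ of Lemma~\ref{D cup Q is hamilton}; its hypotheses (I) and (II) follow from the definition of insertibility together with Lemma~\ref{insertible}~(i),(ii), exactly as in the proofs of Lemmas~\ref{non-insertible} and~\ref{insertible}. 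Since the resulting Hamiltonian cycle of $G[V(D\cup Q_{1}\cup Q_{2})]$ contains $x_{0}\notin V(C)$, it is longer than $C$, contradicting the maximality of $|C|$. As a concrete instance, when $z=u_{s}^{-}$ and $j=t$ one may take
$$D=\IL{C}{u_{t}}{x_{s}}\,\IL{C}{z}{x_{t}}\,u_{s}\,x_{0}\,u_{t},$$
so that the jumps $x_{s}z$, $x_{t}u_{s}$, $u_{s}x_{0}$, $x_{0}u_{t}$ are edges of $G$; this $D$ covers $V(C)\cup\{x_{0}\}$ except for $D_{t}$ and $D_{s}$, which are re-inserted as above.

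The hard part will be assembling $D$ for a general position of $z$. The hypothesis edge $x_{t}u_{s}$ attaches to the attachment vertex $u_{s}\in U$, which is also an endpoint of the $H_{0}$-path, and the edge $x_{s}z$ attaches to $z$; a naive reroute forces both $u_{s}$ and $z$ to carry three of the relevant connections at once, so one cannot keep all of $\ir{C}{z}{u_{s}}$ on the base cycle while still using both edges and the $H_{0}$-path. I expect this to force choosing $z$ to be the neighbour of $x_{s}$ in $\IR{C}{u_{t}}{u_{s}}\sm U$ that is closest to $u_{s}$, so that the residual arc $\ir{C}{z}{u_{s}}$ is as small as possible (ideally empty), and, when $j\neq t$, using the edges from $x_{0}$ to the intermediate vertices of $U$ to bridge the gaps between $u_{t}$ and $u_{j}$. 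Verifying that the reversed pieces and these bridges still satisfy (I)--(II) of Lemma~\ref{D cup Q is hamilton} is the delicate point, and is where I would expect the bulk of the case analysis to lie.
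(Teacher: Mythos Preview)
Your plan diverges from the paper's proof in a crucial way, and the divergence is exactly where the gap lies.

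The paper does \emph{not} try to build a longer cycle directly from the edges $x_t u_s$, $x_s z$, and the $x_0$–$U$ edges. Instead, it argues that $z^{+}$ can be adjoined to the independent set $X\cup\{x_0\}$: since $|X|=\alpha(G)-1$ by Claim~\ref{x0a-1}, this yields $\alpha(G)+1$ pairwise non-adjacent vertices, a contradiction. The cycle-construction step appears only \emph{inside} this argument, under the extra hypothesis that $z^{+}x_h\in E(G)$ for some $x_h\in X\cup\{x_0\}$. That extra edge (more precisely, the edge $z^{+}z_h$ with $z_h$ the first vertex of $\iR{C}{u_h}{x_h}$ adjacent to $z^{+}$) is exactly what allows the base cycle $C^{*}$ to pick up the otherwise unreachable arc from $z^{+}$ forward to $u_s$.

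Your approach lacks that extra edge, and this is fatal. Any cycle $D$ built from $x_t u_s$, $x_s z$, and two $x_0$–$U$ edges will leave the open arc $\ir{C}{z}{u_s}$ uncovered: $z$ and $u_s$ each have only two neighbours on $D$, and once the jumps $x_s z$ and $x_t u_s$ are used, the remaining neighbour of $z$ lies in $\Ir{C}{x_t}{z}$ while the remaining neighbour of $u_s$ is $x_0$. There is no hook left to re-enter $\ir{C}{z}{u_s}$, and this arc need not consist of insertible vertices (it may contain several $u_i$'s and their non-insertible $x_i$'s). Choosing $z$ closest to $u_s$ does not help, because Lemma~\ref{insertible}(\ref{crossing2}) applied with $v_1=x_t$, $w=u_s$ already forbids $x_s u_s^{-}\in E(G)$; thus $z\neq u_s^{-}$, and the ``ideal'' case $\ir{C}{z}{u_s}=\emptyset$ never occurs. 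In particular, your concrete instance ($z=u_s^{-}$, $j=t$) is vacuous.

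So the missing idea is the independence argument via $z^{+}$: it both gives the contradiction directly and, when it fails, supplies the additional edge $z^{+}z_h$ needed for the cycle construction.
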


\begin{proof}
Suppose that there exists
a vertex $z \in N_C(x_s) \cap C[u_{t},u_{s}]$ such that $z \not \in U$.
We show that
$X \cup \{x_0,z^+ \}$ is an independent set of order $|X|+2$.
By Claim \ref{XW},
we only show that $z^+ \not \in X$
and $z^+ \not\in N_{C}(x_{i})$
for each $x_i \in X \cup \{x_{0}\}$.
Since $z \not \in U$,
it follows from Lemma \ref{insertible} (i)
that $z^+ \not \in X$.
Suppose that $z^+ \in N_C(x_h)$ for some $x_h \in X \cup \{ x_0 \}$.
Since $x_s$ is a non-insertible vertex, 
it follows that $x_h \neq x_s$.
Let $z_{s}$ be the vertex in $C(u_{s},x_{s}]$
such that
$z \in N_{G}(z_{s})$
and
$z \not\in N_{G}(v)$ for all $v \in C(u_{s},z_{s})$.
By Lemma \ref{insertible} (ii),
we obtain $x_h \not \in C[u_s',z]$.
Therefore, 
$x_h \in C(z,u_s] \cup \{x_{0}\}$. 
If $x_h \in C(z,u_s]$,
then
we let $z_{h}$ be the vertex in $C(u_{h},x_{h}]$
such that 
$z^{+} \in N_{G}(z_{h})$
and
$z^{+} \not\in N_{G}(v)$ for all $v \in C(u_{h},z_{h})$.
We define the cycle $C^{*}$ as follows (see Figure \ref{Xxzh-fig}):
$$C^{*}
=
\begin{cases}
z_s \ola{C} [z,x_t] \ola{C} [u_s,z_h]C[z^+,u_h] x_0 \ola{C}[u_t,z_s]
&\text{if $x_h \in C(z,u_s]$,}\\
z_s\ola{C}[z,x_t]\ola{C}[u_s,z^+]x_h\ola{C}[u_t,z_s]
&\text{if $x_{h}=x_{0}$.}
\end{cases}
$$

\begin{figure}[h]
\begin{center}
\includegraphics{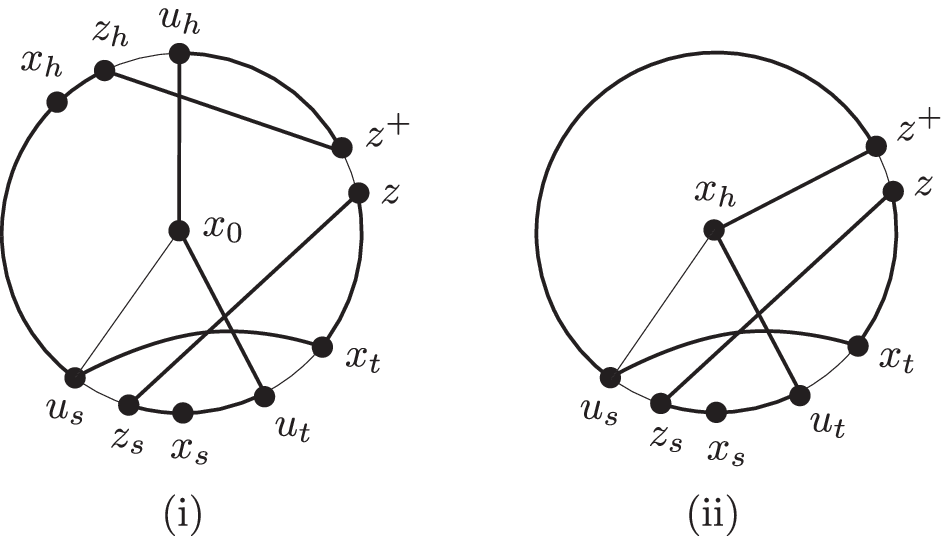}
\caption{Claim \ref{c}}
\label{Xxzh-fig}
\end{center}
\end{figure}

Then,
by similar argument in the proof of Lemma \ref{insertible},
we can obtain a longer cycle than $C$
by inserting all vertices of $V(C \sm C^{*})$ into $C^{*}$. 
This contradicts that $C$ is longest.
Hence $z^+ \not\in N_{C}(x_{h})$
for each $x_h \in X \cup \{x_{0}\}$.
Thus,
by Claim \ref{x0a-1},
$X \cup \{x_0,z^+ \}$ is an independent set of order $|X|+2 = \alpha(G)+1$,
a contradiction.
\end{proof}

\paragraph{}We divide the rest of the proof into two cases.

\bigskip

\noindent\textbf{Case 1. $v_{2} \not\in U$.}
\medskip

Let $Y=N_{G}(v_{2}) \cap X$,
and
let $\gamma=|X|-\kappa(G)-1$.
Note that $|X| = \kappa(G)+\gamma+1 \ge k+\gamma+1$
and
$x_{l}\not\in Y$ since $x_{l}\in V_{1}$.

\begin{Claim} \label{ge3}
$|Y| \ge \gamma+3$.
\end{Claim}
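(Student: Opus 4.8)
The claim asserts $|Y| \ge \gamma + 3$, where $Y = N_G(v_2) \cap X$ and $\gamma = |X| - \kappa(G) - 1$. Since Claim~\ref{x0a-1} gives $|X| = \alpha(G)-1$, we have $\gamma = \alpha(G) - \kappa(G) - 2$, and we have arranged $v_2 \in V_2 \setminus U$ with $v_2 \in \iR{C}{x_r}{u_r'}$ for some $r$. The plan is to bound the complement: I want to show that $X \setminus Y$, the set of $x_i$ not adjacent to $v_2$, is small. The natural way to produce this bound is the same degree-sum machinery used throughout the proof: if $|Y|$ were too small, then $\{v_2\} \cup (\text{large independent subset of } X \cup \{x_0\})$ would be an independent set of size $k+1$ whose degree sum violates the hypothesis $\sigma_{k+1} \ge n + \kappa + (k-2)(\alpha-1)$.

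\textbf{The approach.} First I would set up the target independent set. By Claim~\ref{XW} and Claim~\ref{x0a-1}, $X \cup \{x_0\}$ is independent of size $\alpha(G)$, and $N_C(x_0) = U$. Since $v_2 \notin U$, Lemma~\ref{insertible}(i) and the definition of the non-insertible vertices should let me check that $v_2$ together with those $x_i$ it is \emph{not} adjacent to forms an independent set. I would then select a subset $I \subseteq M_0$ with $|I| = k$, forcing $\{x_i : i \in I\} \cup \{v_2\}$ independent of order $k+1$, and chosen to include indices in $X \setminus Y$ as much as possible. The key quantitative input is the inequality~(\ref{v2}), $\dg{G}{v_2} \le |V_2 \cup S| - 1$, combined with the $C$-degree bounds~(\ref{C}) and~(\ref{xl}) for the $x_i$, and the disjointness of the $H$-neighborhoods from Lemma~\ref{insertible}(i). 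Summing these against $\sigma_{k+1}(G) \ge n + \kappa(G) + (k-2)(\alpha(G)-1)$ should yield a contradiction unless $|Y|$ is large enough, and tracking the constants carefully should produce exactly the threshold $\gamma + 3$.

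\textbf{Main obstacle.} The delicate point is the bookkeeping that ties the size of $Y$ to the constant $3$ rather than $2$ or $4$. The improvement over the generic argument must come from using the structure established earlier, namely that $v_2 \in V_2$ while $x_l \in V_1$ (so $x_l \notin Y$ forces a cut-set split in the degree count), together with the extra slack provided by the $\gamma$ surplus in $|X| = \kappa(G) + \gamma + 1$. I would expect the proof to exploit that both $v_2$ and many $x_i$ lie strictly inside components of $G \ms S$, so that their neighborhoods in $C$ are confined to $V_2 \cup S$ or $V_1 \cup S$ respectively; overlapping these confinements through $S$ (whose size is exactly $\kappa(G)$) is where the $\kappa(G)+1 = |X| - \gamma$ terms cancel and the residual $\gamma + 3$ emerges. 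The hardest part will be verifying that the independent set of size $k+1$ can be chosen to \emph{avoid} too many vertices of $Y$ simultaneously — i.e.\ that if $|Y| \le \gamma + 2$ then enough non-neighbors of $v_2$ remain in $X$ to fill out the set $I$ while still including the forced indices $0$ and $l$ — and then confirming the final degree-sum inequality lands one below the hypothesis threshold.
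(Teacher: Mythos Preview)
Your proposal is correct and follows essentially the same route as the paper: assume $|Y|\le \gamma+2$, use $v_2\notin U=N_C(x_0)$ and $x_l\in V_1$ to see that $0,l\notin\{i:x_i\in Y\}$, then from $|M_0|=|X|+1\ge k+\gamma+2$ choose $I\subseteq M_0\setminus\{i:x_i\in Y\}$ with $|I|=k$ and $\{0,l\}\subseteq I$; bounding $\sum_{i\in I}d_C(x_i)$ via (\ref{xl}) for $x_l$, (\ref{C}) for the rest, and $d_C(x_0)=|X|$, together with (\ref{xwH}) for the $H$-degrees and (\ref{v2}) for $v_2$, gives $\sum_{i\in I}d_G(x_i)+d_G(v_2)\le n+\kappa(G)+(k-2)(\alpha(G)-1)-2$. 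The ``$3$'' you were worried about falls out automatically from the arithmetic $|M_0|-|Y|\ge k$ together with the two units saved by confining $x_l$ to $V_1\cup S$ and $v_2$ to $V_2\cup S$; there is no extra trick needed beyond what you outlined.
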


\begin{proof}
Suppose that
$|Y| \le \gamma+2$.
By the assumption of Case 1,
we have
$x_{0}v_{2} \not\in E(G)$.
Since
$|M_{0}| = |X|+1 \ge k+\gamma+2$ 
and $|Y| \le \gamma+2$,
there exists a subset $I$ of $M_{0} \sm \{i :x_{i} \in Y\}$
such that
$|I|=k$
and
$\{0,l\} \subseteq I$.
Then
$\{x_{i} : i \in I\} \cup \{v_{2}\}$
is an independent set of order $k+1$.
By the inequality (\ref{xl})
and Claims \ref{U1} and \ref{x0a-1},
we obtain
\begin{eqnarray*}
\dg{C}{x_l}
&\le& |C \cap (V_1 \cup  S)|-\sum_{i \in I \sm \{0,l\}}|D_{i}|-|X|\\
&=& |C \cap (V_1 \cup  S)|-\sum_{i \in I \sm \{0,l\}}|D_{i}|-\dg{C}{x_0}.
\end{eqnarray*}
Therefore 
it follows from the inequality  (\ref{C}) that
\begin{equation*}
\sum_{i \in I}\dg{C}{x_i}
\le |C \cap (V_1 \cup  S)|
+(k-2)(\alpha(G)-1).
\end{equation*}
By the inequality (\ref{xwH}),
$\sum_{i \in I}\dg{H}{x_i} \le |H \cap (V_1 \cup S)| -1$.
Summing these two inequalities
and the inequality (\ref{v2})
yields that
\begin{equation*}\label{notr}
\sum_{i \in I}\dg{G}{x_i} +\dg{G}{v_{2}}
\le n+ \kappa(G) +(k-2)(\alpha(G)-1)-2,
\end{equation*}
a contradiction.
\end{proof}

Recall that $r$ is an integer such that $v_{2} \in \iR{C}{x_{r}}{u_{r}'} \cap V_{2}$
(see the paragraph below the proof of Claim \ref{U1}). 
In the rest of Case 1, we assume that $l=1$. 
If $u_{r}' \not= u_{1}$, then let $r = 2$ and $u_{3} = u_{2}'$; 
otherwise, let $r = 3$ and let $u_{2}$ be the vertex with $u_{2}' = u_{3}$.

By Claim \ref{WX},
we have $W \subseteq X$.
Hence
we obtain $Y \cup W \cup L \subseteq X \sm \{x_{1}\}$. 
Recall that $W \cap L = \emptyset$. 
Therefore,
by Claims \ref{WLk-2} and \ref{ge3},
 we obtain
\begin{eqnarray*}
|Y \cap (W \cup L)|
&=& |Y|+|W|+|L|-|Y \cup (W \cup L)|\\
&\ge& \gamma+3+\kappa(G)-2-|X\sm\{x_{1}\}|\\
&=& \gamma+3+\kappa(G)-2-((\kappa(G)+\gamma+1)-1) = 1.
\end{eqnarray*}
Hence there exists a vertex $x_{h} \in Y \cap (W \cup L)$,
that is,
$v_{2}  \in N_{C}(x_{h}) \sm U$.
Since $\ir{C}{x_{2}}{x_{3}} \cap X =\emptyset$
and $\ir{C}{x_{3}}{x_{1}} \cap X =\emptyset$ if $r=3$,
either
$u_{h}\in N_{C}(x_{1})$ and $u_{h}\in C(x_{3},u_{1})$
or
$u_{h}\in N_{C}(x_{2})$ and $u_{h}\in C(x_{1},u_{2})$
holds
(especially, if $r=3$ then $u_h \in N_C(x_2)$ and $u_h \in C(x_1,u_2)$ holds)
 (see Figure \ref{case1}).

\begin{figure}[h]
\begin{center}
\includegraphics{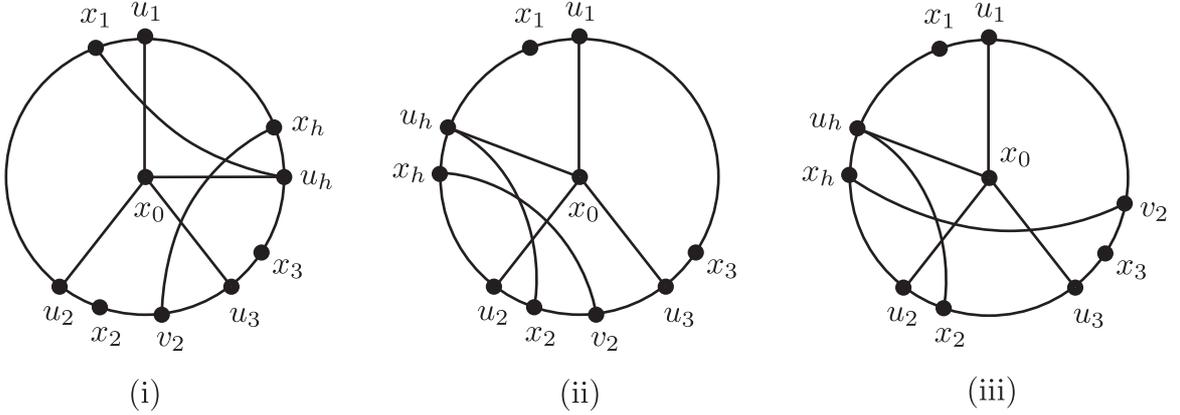}
\caption{The case ${r} = {2}$ and  the case ${r} = {3}$.}
\label{case1}
\end{center}
\end{figure}

If ${r} = {2}$ and $u_{h}  \in N_{C}(x_{1})$,
then
$v_{2} \in \IR{C}{u_{1}}{u_{h}}$
(see Figure \ref{case1} (i)).
If ${r} = {2}$ and $u_{h}  \in N_{C}(x_{2})$,
then
$v_{2} \in \IR{C}{u_{2}}{u_{h}}$
(see Figure \ref{case1} (ii)).
If ${r} = {3}$,
then
$u_{h} \in N_{C}(x_{2})$
and
$v_{2} \in \IR{C}{u_{2}}{u_{h}}$
(see Figure \ref{case1} (iii)).
In each case,
we obtain a contradiction to Claim \ref{c}.

\bigskip

\noindent\textbf{Case 2. $v_{2} \in U$.}
\medskip

We rename $x_i \in X$ for $i \ge 1$ as follows (see Figure \ref{jumping-fig}):
Rename an arbitrary vertex of $X$ as $x_{1}$.
For $i \ge 1$,
we rename
$x_{i+1} \in X$  
so that
$u_{i+1} \in N_C(x_{i}) \cap (U \sm \{ u_{i} \})$
and
$|C[u_{i+1},x_{i})|$ is as small as possible.
(For $x_{i} \in X$,
let $x_i'$ and $x_{i}''$ be the successors of $x_{i}$ and  $x_{i}'$ in $X$ along the orientation of $C$, respectively. 
Then by applying Claim \ref{WLk-2} as $x_1=x_{i}$, $x_2=x_{i}'$ and $x_3=x_i''$, 
it follows that $W \cup L \neq \emptyset$. 
By the definition of $x_{i}', x_{i}''$ and Claim \ref{WX}, 
we have $W_{1} = W_{2} = \emptyset$ (note that $W \cap \{x_{1}, x_{2}, x_{3}\} = \emptyset$). 
By the definitions of $x_{i}', x_{i}'', L_{1}$ and $L_{2}$, 
we also have $L_{1} = L_{2} = \emptyset$. 
Thus $W_{3} \cup L_{3} \neq \emptyset$. 
By Lemma \ref{insertible} (i) and since $W \cup L \subseteq X$, 
this implies that $N_C(x_{i}) \cap (U \sm \{ u_{i} \}) \neq \emptyset$.)
Let $h$  be the minimum integer
such that 
$x_{h+1} \in C(x_h,x_1]$. 
Note that this choice implies $h \geq 2$.
We rename $h$ vertices in $X$
as $\{x_1,x_{2},\ldots,x_{h}\}$ as above,
and
$m-h$ vertices in $X \sm \{x_{1}, x_{2},\ldots,x_{h}\}$
as $\{x_{h+1}, x_{h+2}, \ldots, x_{m}\}$ arbitrarily.
Let 
\begin{align*}
A_1=A_{h+1} = C[x_{1},x_{h})
\textup{ and }
A_i= C[x_{i},x_{i-1}) 
\textup{ for } 
2 \le i \le h. 
\end{align*}
Let
$$U_{1}=\{u_i \in U : x_i \in X \cap V_{1}\}.$$
If possible,
choose $x_1$ so that
$A_{2} \cap U_1 = \emptyset$. 

\begin{figure}[h]
\begin{center}
\scalebox{1}{\includegraphics{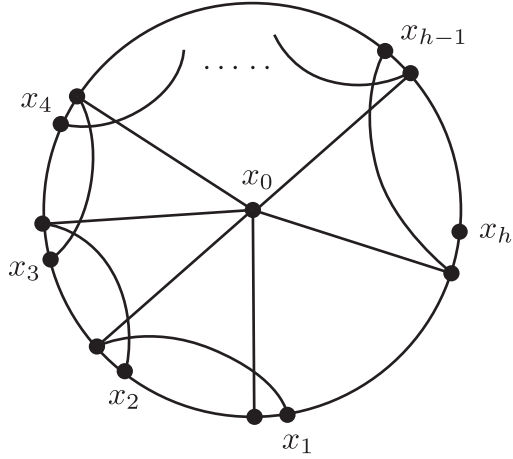}}
\caption{The choice of $\{x_1, \dots, x_{h}\}$.}
\label{jumping-fig}
\end{center}
\end{figure}

\paragraph{}
We divide the proof of Case 2 according to whether $h \le k$ or $h \ge k+1$.

\bigskip
\noindent\textbf{Case 2.1.} $h \le k$.
\medskip

By the choice of 
$\{x_1, \dots, x_{h}\}$,
we have
\begin{eqnarray}
\mbox{$N_{A_{i+1}}(x_i) \cap U \subseteq \{ u_i \}$ for $1 \le i \le h$.}\label{tonari}
\end{eqnarray}
By Claim \ref{c} and (\ref{tonari}),
we obtain
\begin{eqnarray}
\mbox{$N_{C \sm A_{i}}(x_i) \subseteq  (U \sm (A_i  \cup A_{i+1})) \cup D_i \cup \{ u_i \}$ for $2 \le i \le h$.}\label{sotogawa}
\end{eqnarray}

By Lemma \ref{insertible} (i) and (ii),
$N_{A_{i}}(x_i)^{-}  \cap N_{A_{i}}(x_{1})=\emptyset$
for $2 \le i \le h$.
By Lemma \ref{insertible} (i),
we have
$N_{A_{i}}(x_i)^{-}  \cup N_{A_{i}}(x_{1}) \subseteq A_i \sm D$
for $3 \le i \le h$.
Thus, 
it follows from (\ref{sotogawa})
that for $3 \le i \le h$
\begin{eqnarray*}
\mbox{$d_C(x_i) \le (|U|-|(A_{i} \cup A_{i+1}) \cap U| +|D_i|+1) + (|A_{i}|-|A_i \cap D|-\dg{A_{i}}{x_{1}})$.}
\end{eqnarray*}
By Lemma \ref{insertible} (i) and (\ref{tonari}),
we have
$N_{A_{2}}(x_2)^{-}  \cup N_{A_{2}}(x_{1}) \subseteq (A_2 \sm (U \cup D)) \cup D_1 \cup  \{ u_1 \}$.
Thus, 
by (\ref{sotogawa}),
we have
\begin{eqnarray*}
d_C(x_2) &\le& (|U|- |(A_{2} \cup A_{3}) \cap U|+|D_2|+1 )\\ 
&&{}+ (|A_2|-|A_2 \cap (U \cup D)|+|D_1|+1-\dg{A_{2}}{x_{1}}).
\end{eqnarray*}
Since $|A_1 \cap X|=|A_1 \cap U|$,
it follows from Lemma \ref{insertible} (i) that
\begin{eqnarray*}
d_{A_{1}}(x_1)
&\le&
|A_{1}|-|A_1 \cap D|-|A_1 \cap X|\\
&=&
|A_{1}|-|A_1 \cap D|-|A_1 \cap U|.
\end{eqnarray*}

By Claim \ref{x0a-1},
$d_{C}(x_0) =  |U| = \alpha(G)-1$.
Thus,
since $h \le k$,
we obtain
\begin{eqnarray*}
\sum_{0 \le i \le h}d_C(x_i)
&\le& 
\sum_{1 \le i \le h}|A_i|+
h|U|
- 2\sum_{1 \le i \le h}|A_i\cap U| +h
+\sum_{1 \le i \le h}|D_{i}| -\sum_{1 \le i \le h}|A_i \cap D|\\
&=& 
|C|+
(h-2)|U|
+h
+\sum_{1 \le i \le h}|D_{i}|-|D|\\
&\le&
|C|+k+(h-2)(\alpha(G)-1)
+\sum_{1 \le i \le h}|D_{i}|-|D|.
\end{eqnarray*}

Let $I$ be a subset of $M_{0}$
such that
$|I|=k+1$
and
$\{0,1,\ldots, h\} \subseteq I$.
By Claim \ref{XW},
$\{x_i : i \in I\}$ is an independent set
of order $k+1$.
By the above inequality and the inequality (\ref{C}),
we have \begin{eqnarray*}
\sum_{i \in I}d_C(x_i)
&\le& |C|+k+(k-2)(\alpha(G)-1)
\end{eqnarray*}
By the inequality (\ref{H}),
$\sum_{i \in I}d_H(x_i) \le |H|-1$.
Hence
$\sum_{ i \in I}d_G(x_i) \le |G|+\kappa(G)+(k-2)(\alpha(G) -1)-1$,
a contradiction.

\bigskip
\noindent\textbf{Case 2.2.} $h \ge k+1$.
\medskip

By Claims \ref{U1} and \ref{x0a-1}, the assumption of Case 2 and
the choice of $r$ and $v_{2}$,
we have $\bigcup_{i=2}^{p}V_{i} \subseteq U=N_{C}(x_{0})$.
Since $x_{0} \in V_{1} \cup S$ by Claim \ref{U1}, 
this implies that $x_{0} \in S$.

\begin{Claim} \label{XV1}
$|X \cap V_{1}| \le k-1$.
\end{Claim}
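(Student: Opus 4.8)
The plan is to argue by contradiction: suppose $|X \cap V_{1}| \ge k$, and produce an independent set of order $k+1$ whose degree sum is at most $n + \kappa(G) + (k-2)(\alpha(G)-1) - 2$, contradicting the hypothesis on $\sigma_{k+1}$. The set I would use is $T \cup \{v_{2}\}$, where $T$ is any $k$-element subset of $X \cap V_{1}$ and $v_{2} \in V_{2}$ is the vertex fixed before the case division. This set has order $k+1$ and is independent: since $T \subseteq X$ it is independent by Claim \ref{XW}; the vertex $v_{2}$ lies in $V_{2}$ while $T \subseteq V_{1}$, so no edge of $G$ joins $v_{2}$ to $T$ (they lie in distinct components of $G \ms S$); and $v_{2} \notin X \supseteq T$ because $X \subseteq V_{1} \cup S$ by Claim \ref{U1} while $v_{2} \in V_{2}$.

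For the degree estimate, first I would note that the reasoning giving (\ref{xl}) applies to any $x_{s} \in X \cap V_{1}$, not just to $x_{l}$: since $x_{s} \in V_{1}$ we have $N_{C}(x_{s}) \subseteq (V_{1} \cup S) \cap V(C)$, and by Lemma \ref{insertible}\,(\ref{crossing}) this neighbourhood avoids $\big(\bigcup_{i \ne s} D_{i}\big) \cup X$, whence $\dg{C}{x_{s}} \le |C \cap (V_{1} \cup S)| - |D| + |D_{s}| - |X|$. I would apply this sharp bound to a single vertex $x_{s} \in T$ and the weaker bound (\ref{C}) to the remaining $k-1$ vertices of $T$. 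Using $|X| = \alpha(G)-1$ from Claim \ref{x0a-1}, the $\alpha$-terms collapse from $k-1$ copies to $k-2$ copies and the $|D|$-terms leave only a nonpositive remainder, so $\sum_{x \in T}\dg{C}{x} \le |C \cap (V_{1} \cup S)| + (k-2)(\alpha(G)-1)$. Adding the $H$-degree bound $\sum_{x \in T}\dg{H}{x} \le |H|-1$ from (\ref{xwH}) and using $H \subseteq V_{1} \cup S$ (Claim \ref{U1}) gives $\sum_{x \in T}\dg{G}{x} \le |V_{1} \cup S| + (k-2)(\alpha(G)-1) - 1$. Finally I would add (\ref{v2}), namely $\dg{G}{v_{2}} \le |V_{2} \cup S| - 1$, and use $|V_{1} \cup S| + |V_{2} \cup S| = |V_{1} \cup V_{2} \cup S| + |S| \le n + \kappa(G)$ to reach the contradiction.

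The only real subtlety is the summation of the cycle degrees. Applying the sharp bound above to all $k$ vertices of $T$ would count the common region $(V_{1} \cup S) \cap V(C)$ a total of $k$ times and lose the argument, while applying (\ref{C}) to all of them would leave $k$ copies of $(\alpha(G)-1)$, which is too large. The balance is struck by using the sharp bound exactly once: it pays the large term $|C \cap (V_{1} \cup S)|$ a single time and simultaneously cancels one factor $(\alpha(G)-1)$ through the term $-|X|$, while $v_{2}$ supplies the second saved factor through the disjoint budget $|V_{2} \cup S|$. I expect no further difficulty, since independence of $T \cup \{v_{2}\}$ and all the component bounds follow directly from Claims \ref{U1}, \ref{x0a-1} and \ref{XW} together with Lemma \ref{insertible}\,(\ref{crossing}); note that this argument uses neither $h \ge k+1$ nor $x_{0} \in S$, so it is really a general consequence of the configuration fixed by Claim \ref{x0a-1}.
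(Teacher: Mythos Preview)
Your argument is correct and reaches the same contradiction as the paper, but the route differs in one place. The paper bounds the cycle degrees of \emph{two} vertices $x_{s},x_{t}\in T$ simultaneously via the interlocking argument behind inequality~(\ref{2vertices}), restricted to $V_{1}\cup S$ (this restriction uses the Case~2 fact $\bigcup_{i\ge 2}V_{i}\subseteq U$, so that predecessors of $N_{C}(x_{s})$ stay in $V_{1}\cup S$); that gives $\dg{C}{x_{s}}+\dg{C}{x_{t}}\le |C\cap(V_{1}\cup S)|-\sum_{i\in I\setminus\{s,t\}}|D_{i}|$ directly with $(k-2)$ copies of $(\alpha-1)$ coming from the remaining vertices. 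You instead apply the single-vertex bound of type~(\ref{xl}) to one $x_{s}$, obtaining the extra term $-|X|$, and then invoke Claim~\ref{x0a-1} ($|X|=\alpha(G)-1$) to absorb one of the $(k-1)$ copies of $(\alpha-1)$ coming from~(\ref{C}). Both summations collapse to the same bound $\sum_{x\in T}\dg{C}{x}\le |C\cap(V_{1}\cup S)|+(k-2)(\alpha(G)-1)$.

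Your observation that the argument needs neither $h\ge k+1$ nor $x_{0}\in S$ is accurate: your version avoids the Case~2-specific inclusion $\bigcup_{i\ge 2}V_{i}\subseteq U$ altogether, at the price of using Claim~\ref{x0a-1}. The paper's version trades the other way. Either is fine here.
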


\begin{proof}
Suppose that
$|X \cap V_{1}| \ge k$.
Let $I$ be a subset of $M_{1}$
such that
$|I|=k$
and
$I \subseteq \{i : x_{i} \in X \cap V_{1}\}$.
Then
$\{x_i : i \in I\} \cup \{v_{2}\}$ is an independent set
of order $k+1$.
Let $s$ and $t$ be integers in $I$.
Since
$x_s,x_t \in V_1$,
$D \subseteq V_1 \cup S$
and
$\bigcup_{i=2}^{p}V_{i} \subseteq U$,
the similar argument as that of the inequality (\ref{2vertices})
implies that
\begin{equation*}
\dg{C}{x_s} +\dg{C}{x_t}
\le
|C \cap (V_{1} \cup S)|-\sum_{i \in I\sm \{s,t\}}|D_{i}|.
\end{equation*}
By the inequalities (\ref{C}) and (\ref{xwH}),
we have
$
\sum_{i \in I \sm \{s,t\}}\dg{C}{x_i}
\le  \sum_{i \in I\sm \{s,t\}}|D_{i}|+(k-2)(\alpha(G)-1)$
and
$
\sum_{i \in I }\dg{H}{x_i}
\le  |H \cap (V_{1} \cup S)|-1$,
respectively.
On the other hand,
we obtain
$\dg{G}{v_{2}} \le |V_{2} \cup S|-1.$
By these four inequalities,
$\sum_{i \in I}\dg{G}{x_i}+\dg{G}{v_{2}}
\le n+\kappa(G)+(k-2)(\alpha(G)-1)-2,$
a contradiction.
Therefore
$|X \cap V_{1}|  \le k-1$.
\end{proof}

Recall $U_{1}=\{u_i \in U : x_i \in X \cap V_{1}\}$.
By Claim \ref{XV1},
we have
$|U_1| \le k-1$.
By the assumption of Case 2.2 and the choice of $x_{1}$, 
we obtain
$A_2 \cap U_1= \emptyset$,
and hence
we can take
a subset $I$ of $\{2,3,\ldots,h\}$
such that $|I|=k$
and
$\{i : A_{i+1} \cap U_{1}\not=\emptyset\} \subseteq I$.
Let 
$$X_{I}=\{x_i : i \in I\}.$$
By Claim \ref{XW},
$X_{I} \cup \{x_{0}\}$ is an independent set
of order $k+1$.
Let
\begin{align*}
B_1 = B_{h+1} =  \ir{C}{u_1}{u_{h}}
\text{\ \ and\ \ } B_i =  \ir{C}{u_i}{u_{i-1}} \text{\ \ for } 2 \le i \le h.
\end{align*}
Then,
since
$|\Ir{C}{u_i}{u_{i}'}| \ge 2$ for $i \in M_1 \setminus I$,
the following inequality holds:
\begin{eqnarray*}
|C|
& \ge& \sum_{i \in I}|B_i\cup \{u_i\}|+2\Big(|U|-\sum_{i \in I}|(B_i\cup \{u_i\}) \cap U|\Big)\\
& =& \sum_{i \in I}|B_i|+2\Big(|U|-\sum_{i \in I}|B_i \cap U|\Big)-k.
\end{eqnarray*}
If $x_{i} \in X_{I} \cap S$,
then
it follows from Lemma \ref{insertible} (i)
and Claim \ref{c}
that 
\begin{eqnarray*}
d_C(x_i)
&\le&
 \Big(|U|- |B_{i}\cap U|- |B_{i+1} \cap U_{1}|\Big) + \Big(|B_i|-|\{x_{i}\}|-|(B_i \cap U)^{+}|\Big)\\
&=&
 |U|+ |B_i|-2|B_i \cap U|-|B_{i+1} \cap U_{1}|-1.
\end{eqnarray*}
If $x_{i} \in X_{I} \cap V_{1}$,
then,
by Lemma \ref{insertible} (i)
and Claim \ref{c},
\begin{eqnarray*}
d_C(x_i)
&\le&
 \Big(|U|- |B_{i}\cap U|- |B_{i+1} \cap U_{1} |-|(U \cap V_{2}) \sm B_{i}|+ |B_{i+1} \cap U_1 \cap V_2|\Big)\\
 &&{}+ \Big(|B_i|-|\{x_{i}\}|-|(B_i \cap U)^{+}|- |U \cap V_{2} \cap B_{i}|\Big)\\
&=&
 |U| + |B_i|-2|B_i \cap U|-|B_{i+1} \cap U_{1}|-1- \Big(|U \cap V_{2}| - |B_{i+1} \cap U_1 \cap V_2|\Big).
\end{eqnarray*}
Since $U \cap V_2 \neq \emptyset$, we obtain 
$|U \cap V_{2}| - |B_{i+1} \cap U_1 \cap V_2| \ge 1$ for all $i \in I$ except for at most one,
and hence
$$\sum_
{i \in I\,:\,x_i \in  X_I \cap V_1}
\Big(|U \cap V_{2}| - |B_{i+1} \cap U_1 \cap V_2|\Big) \ge |X_{I} \cap V_{1}|-1.$$

By the choice of $I$,
we have
$$|U_{1}|=\sum_{i \in I}|A_{i+1} \cap U_{1}|
= \sum_{i \in I}|B_{i+1} \cap U_{1}|+ \big|\{u_{i} : x_{i} \in X_{I} \cap V_{1}\}\big|.$$
On the other hand,
since $x_{0} \in S$,
it follows from Claim \ref{U1}
that
$$
|U_{1}|=|X \cap V_1|  = |X \sm S| \ge |X| -(\kappa(G)-1).$$
Moreover, by Claim \ref{x0a-1}, 
\begin{align*}
d_{C}(x_{0}) = |U| = |X| = \alpha(G) - 1. 
\end{align*}
Thus,
we deduce
\begin{eqnarray*}
\sum_{i \in I \cup \{ 0 \}}d_C(x_i)
&\le&
(k+1)|U|
+\sum_{i \in I}|B_i|
-2\sum_{i \in I}|B_i\cap U|
\\
&&{} 
-\sum_{i \in I}|B_{i+1} \cap U_{1}|-k-(|X_{I} \cap V_{1}|-1)
\\
&=&
\Big( \sum_{i \in I}|B_i|+2\big(|U|-\sum_{i \in I}|B_i \cap U|\big)-k \Big)
+
(k-1)|U|\\
&&{}-
\Big( \sum_{i \in I}|B_{i+1} \cap U_{1}|+ \big|\{u_{i} : x_{i} \in X_{I} \cap V_{1}\}\big| \Big) + 1\\
&\le&
|C|
+(k-1)|U|
+\kappa(G)-|X|\\
&=&
|C|
+\kappa(G)
+(k-2)(\alpha(G)-1).
\end{eqnarray*}
By the inequality (\ref{H}),
$\sum_{i \in I \cup \{ 0 \}}d_H(x_i) \le |H|-1$.
Hence
$\sum_{ i \in I \cup \{ 0 \}}d_G(x_i) \le |G|+\kappa(G)+(k-2)(\alpha(G) -1)-1$, a contradiction.
\end{proof}


\end{document}